\newtheorem{theorem}{Theorem}[section]
\newtheorem{corollary}{Corollary}[theorem]
\newtheorem{lemma}[theorem]{Lemma}
\newtheorem{definition}{Definition}[section]
\theoremstyle{remark}
\newtheorem{remark}[corollary]{Remark}
\newcommand{\R}{\mathbb{R}}
\newcommand{\Rd}{\mathbb{R}^d}
\newcommand{\Z}{\mathbb{Z}}
\newcommand{\T}{\mathbb{T}}
\newcommand{\TT}{\mathbb{T}^2}
\newcommand{\C}{\mathbb{C}}
\newcommand{\N}{\mathbb{N}}
\DeclareMathOperator\supp{supp}
\newcommand\norm[1]{\left\lVert#1\right\rVert}
\newcommand\abs[1]{\left\lvert#1\right\rvert}
\title{Global Well-posedness for the periodic fractional cubic NLS in 1D}
\author{Alexandre Megretski}
\address{LIDS, Massachusetts Institute of Technology, Cambridge, MA 02139-4307}
\email{ameg@mit.edu}
\author{Nikolaos Skouloudis} %$^{\dagger}$}
\address{LIDS, Massachusetts Institute of Technology, Cambridge, MA 02139-4307}
\email{nskoulou@mit.edu}
\begin{document}
\begin{abstract}
We consider the defocusing periodic fractional nonlinear Schrödinger equation
\begin{equation*}
     i \partial_t u +\left(-\Delta\right)^{\alpha}u=-\abs{u}^2u,
\end{equation*}
where $\frac{1}{2}< \alpha < 1$ and the operator $(-\Delta)^\alpha$ is the fractional Laplacian with symbol $\abs{k}^{2\alpha}$. We establish global well-posedness in $H^s(\T)$ for $s\geq \frac{1-\alpha}{2}$ and we conjecture this threshold to be sharp as it corresponds to the pseudo-Galilean symmetry exponent. Our proof uses the $I$-method to control the $H^s(\T)$-norm of solutions with infinite energy initial data. A key component of our approach is a set of improved long-time bilinear Strichartz estimates on the rescaled torus, which allow us to exploit the subcritical nature of the equation.
\end{abstract}
\maketitle
\section{Introduction}
  We consider the defocusing periodic fractional nonlinear Schrödinger equation
\begin{equation}
    \label{eq:fractional_main_lwp}
    \begin{cases}
       i \partial_tu +\left(-\Delta\right)^{\alpha}u=-\abs{u}^2u, \\
       u(x,0)=u_0(x) \in H^{s}\left(\T\right),
    \end{cases}
    \tag{fNLS}
\end{equation}
where $\T=\R/\left(2\pi \Z \right)$ is the unit circle, $\frac{1}{2}< \alpha < 1$ and the operator $(-\Delta)^\alpha$ is the fractional Laplacian with symbol $\abs{k}^{2\alpha}$. The fractional nonlinear Schrödinger equation was originally introduced by Laskin \cite{Lakshin} on the Euclidean space ($\Rd$) in the study of fractional quantum mechanics. Moreover, on $\mathbb{R}$, (\ref{eq:fractional_main_lwp}) was formally derived in \cite{Kirkpatrick_Staffilani} as the continuum limit of a discrete fractional nonlinear Schrödinger equation with long-range lattice interactions. See also \cite{Hong_strong_convergence,Hong_strong_convergence_periodic} for similar results in higher dimensions and periodic domains. The aim of this paper is to prove global well-posedness of the Cauchy problem \eqref{eq:fractional_main_lwp} in $H^s(\T)$ for $s \geq \frac{1-\alpha}{2}$.\\
We recall that the initial value problem \eqref{eq:fractional_main_lwp} is \textit{locally well-posed} if, for any $u_0 \in H^s(\T) $, there exists a time $T>0$, an open ball $B$ in $H^s(\T)$ containing $u_0$, and a closed subspace, $X$, of $C^0([0,T] \rightarrow H^s(\T))$, such that for each $u_0 \in B$ there exists a unique $u \in X$ satisfying,
\begin{equation*}
    u(t)=S(t)u_0+i \int_0^t S(t-t') \abs{u(t')}^2u(t') dt',
\end{equation*}
where $S(t)$ is defined to be fractional Schrödinger group (see \cref{sec:2}). Moreover, we require the map $u_0 \mapsto u$ to be uniformly continuous as a map from $B$ (endowed with the $H^s(\T)$ topology) to $X$ (endowed with the $C^0([0,T] \rightarrow H^s(\T))$ topology). If we can take $T>0$ to be arbitrarily large we say that (\ref{eq:fractional_main_lwp}) is \textit{globally well-posed}. \\
When $\alpha=1$, \eqref{eq:fractional_main_lwp} reduces to the classical cubic nonlinear Schrödinger equation (NLS) which is completely integrable. The local theory was established by Bourgain \cite{Bourgain_lwp}, who employed number theoretic techniques to obtain sharp Strichartz estimates, proving local well-posedness for $s \geq 0$. For fractional $\frac{1}{2}<\alpha<1$, the $L_{t,x}^4(\TT)$ Strichartz estimates were generalized in \cite{demirbas_fractional}, leading to local well-posedness for $s>\frac{1-\alpha}{2}$. The endpoint case, $s=\frac{1-\alpha}{2}$, was subsequently addressed in \cite{CHO_2014} by adapting the multilinear estimate techniques developed in \cite{Tao_dyadic}. Although the focus of this paper is on the periodic setting, it is worth mentioning that in \cite{Sire_Hong_fractional_lwp_euclidean}, local well-posedness was established on the real line for the same range of regularity. We also refer the reader to \cite{Ionescu_Pusateri,dinh} for results concerning $0<\alpha<\frac{1}{2}$ and more general power-type nonlinearities. \\
The aforementioned local well-posedness results are expected to be sharp as they correspond to pseudo-Galilean invariance exponent $s_g:=\frac{1-\alpha}{2}$ (see \cite{Sire_Hong_fractional_lwp_euclidean}). We recall that the pseudo-Galilean invariance exponent corresponds to the transformation,
\begin{equation*}
    u^v(x,t):= e^{i\left(v x-\abs{v}^{2\alpha}t\right)}u\left(x- 2\alpha\abs{v}^{2(\alpha-1)}vt,t\right), \text{   } u_0^v(x):= e^{iv x}u_0\left(x\right),
\end{equation*}
and leaves (\ref{eq:fractional_main_lwp}) invariant only when $\alpha=1$. In particular, for $\alpha=1$ and $s<s_g=0$, one can use phase decoherence to prove that the data-to-solution map is not uniformly continuous even on an arbitrarily short time intervals \cite{Burq_illposedness_T,Tao_disperive_book}, hence demonstrating the sharpness of Bourgain's result. An analogous construction to \cite{Burq_illposedness_T} yields ill-posedness for $s<0$ and $\frac{1}{2}<\alpha<1$. Moreover, on the real line, by adapting the techniques used in \cite{Christ_Tao_illposedness_2}, Cho et al.~\cite{CHO_2014} exploited the ill-posedness of the classical NLS together with the pseudo-Galilean transformation to prove ill-posedness when $s<s_g$. As observed by Cho et al.~\cite{CHO_2014}, it is natural to conjecture that ill-posedness on $\T$ arises from initial data whose Fourier support is contained in an interval of length $N^{1-\alpha}$ centered at some large frequency $N$. Indeed, the aforementioned example demonstrates that the trilinear estimates used in the proof of local well-posedness (see Lemma \ref{lemma:trilinear_estimates_lwp}) can only hold for $s\geq s_g.$ Although we were unable to leverage this intuition to show failure of local uniform continuity of the data-to-solution map, we managed to prove that the nonlinear part of the first Picard iterate is unbounded in $H^s(\T)$ for $s<s_g$. This implies that the data-to-solution map cannot be $C^3$ — see Theorem \ref{theorem:main_illposedness_theorem_n=1} for a precise statement. \\
It is also useful to recall the scaling critical exponent, $s_c=\frac{1}{2}-\alpha$, which corresponds to the transformation
\begin{equation*}
    u^\lambda(x,t):= \frac{1}{\lambda^\alpha} u\left(\frac{x}{\lambda}, \frac{t}{\lambda^{2\alpha}}\right), \text{   } u_0^\lambda(x):= \frac{1}{\lambda^\alpha} u_0\left(\frac{x}{\lambda}\right),
\end{equation*}
and leaves (\ref{eq:fractional_main_lwp}) invariant but now $u^\lambda$ is a function on $\T_\lambda \times \R$ where $\T_\lambda := \R /(2\pi\lambda\Z)$. We emphasize that since $s_c<0<s_g$, the local well-posedness regularity threshold is dictated by the pseudo-Galilean symmetry. In fact, \eqref{eq:fractional_main_lwp} is mass-subcritical, a crucial property that will be exploited in our proof (see Lemma \ref{lemma: lwp_ILNS}). \\
Furthermore, by recalling that smooth solutions to (\ref{eq:fractional_main_lwp}), conserve the mass and energy,
\begin{subequations}
\begin{align}
 \label{def: mass} 
    M\left(u(t)\right) &= \norm{u(t)}_{L^2(\T)},\tag{mass} \\
 \label{def: energy}
    E(u(t)) &= \frac{1}{2}\norm{\left(-\Delta\right)^\frac{\alpha}{2} u(t)}_{L^2(\T)}^2 +\frac{1}{4} \norm{u(t)}^4_{L^4(\T)},   \tag{energy}  
\end{align}
\end{subequations}
we can use the persistence of regularity and Bourgain's local well-posedness result \cite{Bourgain_lwp} to obtain global well-posedness for $s\geq 0$ when $\alpha=1$. In the fractional case, global well-posedness similarly follows from the local theory \cite{demirbas_fractional} when $s \geq \alpha$. By employing the high-low method introduced in \cite{Bourgain_HighLow}, Demirbas et al.~\cite{demirbas_fractional} further extended global well-posedness to the regime $s>\frac{10\alpha+1}{12}$. The goal of this paper is improve this threshold by using the $I$-method. Originally introduced in \cite{I-method_1,I-method_2}, the $I$-method has become a canonical tool in obtaining global well-posedness for infinite energy initial data. The method relies on constructing families of modified Hamiltonians which are almost conserved. This approach has been successfully applied to a range of dispersive equations, including the cubic and quintic nonlinear Schrödinger equations \cite{mass_critical_gwp_staffilani_tzirakis, Schippa_mass_critical, mass_critical_Li}, the derivative NLS \cite{I-method_2}, and the KdV equation \cite{KdV}, both in periodic and Euclidean settings.\\
In our setting, we establish global well-posedness for $s\geq\frac{1-\alpha}{2}$. The main ingredients of the proof are linear $L^6_{t,x}$ Strichartz estimates (see Lemma \ref{lemma_Strichartz_Linear}), which follow immediately from the $\ell^2$ decoupling inequality \cite{Bourgain_Demeter_decoupling,Schippa_fractional}, and improved long-time bilinear Strichartz estimates on the rescaled torus $\T_\lambda=\R/\left(2\pi\lambda \Z\right)$ (see Lemma \ref{lemma_Strichartz_bilinear}). The latter estimates are then used to quantify the growth of the modified Hamiltonian, which in turn yields polynomial-in-time bounds for the $H^s(\T)$ norm of solutions to (fNLS). This immediately implies existence and uniqueness of solutions for arbitrarily large time intervals. More formally, we prove the following results.
\begin{theorem}
\label{theorem:main_gwp_theorem_n=1}
(\ref{eq:fractional_main_lwp}) is globally well-posed for $s\geq \frac{1-\alpha}{2}$. %Moreover, for any $T>0$ and $\frac{1-\alpha}{2}<s\leq \alpha$, the solution satisfies\textcolor{red}{need to fix this bound}
% \begin{equation*}
%     \sup_{t \in [0,T]}\norm{u(t)}_{H^s(\T)} \leq C\norm{u_0}_{H^s(\mathbb{T})}(1+T^\frac{3s(\alpha-s)}{4(1-\alpha)+6\alpha(2\alpha-1)}),
% \end{equation*}
% where $C>0$ is a constant that only depends on $\alpha$, $s$ and $\norm{u_0}_{L^2(\T)}$.
\end{theorem}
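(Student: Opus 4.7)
The plan is to apply the $I$-method on a rescaled torus to upgrade the local theory of Lemma \ref{lemma:trilinear_estimates_lwp} into a polynomial-in-time a priori bound on $\norm{u(t)}_{H^s(\T)}$. Such a bound rules out finite-time blow-up and, combined with local well-posedness, yields global well-posedness.

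First, I would introduce the standard Fourier multiplier $I = I_N$ of order zero with symbol $m(k)$ satisfying $m(k)=1$ for $\abs{k}\leq N$ and $m(k)\sim (N/\abs{k})^{\alpha-s}$ for $\abs{k}\gg N$, so that the smoothed function $Iu$ lies in the energy space $H^\alpha(\T)$ with $\norm{Iu}_{H^\alpha(\T)} \lesssim N^{\alpha-s}\norm{u}_{H^s(\T)}$. The quantity to be monitored is the modified energy $E(Iu(t))$, which at $t=0$ is bounded by a constant times $N^{2(\alpha-s)}$ plus lower-order mass terms. Since this is large in $N$, I would then exploit the scaling symmetry to pass to the rescaled torus $\T_\lambda$, choosing $\lambda$ as a suitable power of $N$ so that the rescaled data $u_0^\lambda$ satisfies $E(Iu_0^\lambda)\leq 1$.

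The crux of the argument is an almost conservation law: on a time interval of unit length on $\T_\lambda$, during which the $I$-equation is well-posed by Lemma \ref{lemma: lwp_ILNS}, one must show
\[
    \bigl|E(Iu^\lambda(1)) - E(Iu^\lambda(0))\bigr| \lesssim N^{-\beta}
\]
for some quantitative $\beta>0$. This comes from computing $\frac{d}{dt}E(Iu^\lambda)$ and exploiting the commutator between $I$ and the cubic nonlinearity: the resulting quadrilinear form vanishes when all four interacting frequencies are below $N$, so the nontrivial contribution comes from high-frequency interactions on which the symbol difference carries a favorable power of $N$. Bounding this contribution requires a Littlewood--Paley case analysis, using the $L^6_{x,t}$ estimate of Lemma \ref{lemma_Strichartz_Linear} together with the improved long-time $L^4_{x,t}$ and bilinear Strichartz estimates on $\T_\lambda$ of Lemma \ref{lemma_Strichartz_bilinear}. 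The bilinear gain, which exploits transversality between separated Fourier supports, is precisely what enables the estimate to reach the pseudo-Galilean regularity threshold.

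With the almost conservation law in hand, one iterates it over the $\sim T\lambda^{2\alpha}$ consecutive unit time windows needed to cover $[0,T]$ after rescaling, producing a total drift of order $T\lambda^{2\alpha}N^{-\beta}$ in $E(Iu^\lambda)$. Undoing the scaling converts this into a polynomial bound on $\norm{u(T)}_{H^s(\T)}$, provided $\lambda$ and $N$ are chosen as appropriate polynomial functions of $T$ so that the iteration closes while $\beta$ exceeds the scaling loss. The main obstacle is obtaining the almost conservation law with $\beta$ large enough to absorb the $\lambda^{2\alpha}T$ loss all the way down to $s=s_g=\frac{1-\alpha}{2}$. This is where the mass-subcritical nature of \eqref{eq:fractional_main_lwp} (encoded in $s_c<0<s_g$) becomes essential: it provides the derivative budget in the multilinear commutator estimate that, once combined with the bilinear Strichartz refinement on $\T_\lambda$, produces the quantitative gain needed to balance the scaling cost and complete the iteration.
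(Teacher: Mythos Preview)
Your broad strategy---$I$-method, rescale to $\T_\lambda$, prove an almost conservation law, iterate, and rescale back---matches the paper. However, you diverge from the paper's actual argument in two technical respects that are precisely what make the endpoint $s=\frac{1-\alpha}{2}$ reachable.

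First, the paper does not bound the increment of the first modified energy $E^1(u)=E(Iu)$ directly. Instead it introduces a \emph{second} modified energy $E^2$, in which the quartic part carries a correction multiplier $M_4$ (defined in \eqref{def: M4}) designed so that the $\Lambda_4$ contribution to $\partial_t E^2$ vanishes identically off the resonant set. This pushes the entire increment to a six-linear expression $\Lambda_6(M_6;u)$; a pointwise bound on $M_4$ (Lemma~\ref{lemma: M_4_bounded}) together with Lemma~\ref{lemma: E_2_vs_E_1} then lets one transfer control between $E^1$ and $E^2$. Your quadrilinear commutator approach is the ``first generation'' $I$-method; it typically yields a weaker decay exponent in $N$ and there is no indication it would reach the Galilean threshold here.

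Second, the local interval produced by Lemma~\ref{lemma: lwp_ILNS} is not of unit length but of length $T=\lambda^{2(2\alpha-1)-\epsilon}$. This long-time local theory is where the long-time Strichartz estimates of Lemmas~\ref{lemma_Strichartz_bilinear} and \ref{lemma: Bilinear Y^0 Strichartz} actually enter, and it is what encodes the mass-subcritical gain you allude to: it reduces the number of iterations to roughly $N^{\gamma(4\alpha-1)}\lambda^{4\alpha-2}$ rather than the $T\lambda^{2\alpha}$ you quote. With unit-length steps and the first modified energy, the iteration budget would not close at $s=\frac{1-\alpha}{2}$.
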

\begin{theorem}
\label{theorem:main_illposedness_theorem_n=1}
Suppose that $s< \frac{1-\alpha}{2}$ and let $r>0$ and $T >0$ be arbitrarily small constants. Assume that the data-to-solution map $u_0 \mapsto u(\cdot)$ associated with (\ref{eq:fractional_main_lwp}) for smooth initial data extends continuously to a map from the closed ball in $H^s(\T)$ of radius $r$ centered at the origin into $C^0\left([0,T] \rightarrow H^s(\T)\right)$. Then, this map is not $C^3$ at the origin.
\end{theorem}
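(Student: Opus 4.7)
The plan is to reduce $C^{3}$-regularity of the solution map at the origin to a trilinear estimate on the first nonlinear term of the Picard expansion, and then invalidate that estimate by testing it against pseudo-Galilean-type data. For smooth $\phi$ and small $\epsilon$, iterating the Duhamel formula gives
\[
    u(t) = \epsilon S(t)\phi + \epsilon^{3}\,\Psi_{3}[\phi](t) + O(\epsilon^{5}),\quad
    \Psi_{3}[\phi](t) := i\int_{0}^{t}\! S(t-t')\,\lvert S(t')\phi\rvert^{2} S(t')\phi\, dt' ,
\]
so if the hypothetical extension $\Phi$ is $C^{3}$ at the origin, comparing with its Taylor expansion shows $\tfrac{1}{6}D^{3}\Phi|_{0}(\phi,\phi,\phi)=\Psi_{3}[\phi]$ on smooth $\phi$. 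Continuity of $D^{3}\Phi|_{0}$ as a trilinear map into $C^{0}([0,T]\to H^{s}(\T))$ thus forces
\[
   \sup_{t\in[0,T]}\bigl\|\Psi_{3}[\phi](t)\bigr\|_{H^{s}(\T)}\lesssim \|\phi\|_{H^{s}(\T)}^{3}\quad \text{for every smooth }\phi\in H^{s}(\T),
\]
and it suffices to exhibit a sequence of smooth $\phi_{N}$ violating this bound.

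Following the heuristic quoted in the introduction, I take $\phi_{N}(x):=\sum_{k\in I_{N}} e^{ikx}$ with $I_{N}:=\Z\cap[N,\,N+N^{1-\alpha}]$, so that $\|\phi_{N}\|_{H^{s}(\T)}^{2}\sim N^{2s}\cdot N^{1-\alpha}$. On the Fourier side,
\[
   \widehat{\Psi_{3}[\phi_{N}](t)}(k) = e^{-it|k|^{2\alpha}}\!\!\sum_{\substack{k_{1}-k_{2}+k_{3}=k\\ k_{j}\in I_{N}}}\!\frac{e^{it\Omega}-1}{i\Omega},\quad \Omega:=|k|^{2\alpha}-|k_{1}|^{2\alpha}+|k_{2}|^{2\alpha}-|k_{3}|^{2\alpha}.
\]
Setting $k_{j}=N+m_{j}$ and $k=N+m$ with $m=m_{1}-m_{2}+m_{3}$, the constant and linear coefficients of the Taylor expansion of $x\mapsto x^{2\alpha}$ around $N$ cancel, leaving
\[
  \Omega = 2\alpha(2\alpha-1)\,N^{2\alpha-2}(m_{1}-m_{2})(m_{3}-m_{2}) + O(N^{-\alpha}).
\]
Because $|m_{j}|\le N^{1-\alpha}$, this yields $|\Omega|\lesssim 1$ uniformly in $N$ and in the triple, which is the quantitative form of the pseudo-Galilean obstruction.

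With $\Omega$ uniformly bounded, I would choose $t_{N}=t_{0}\in(0,T]$ small enough (depending only on the $O(1)$ constant) that $|t_{0}\Omega|\le 1$ for every triple; then $\operatorname{Re}\!\bigl((e^{it_{0}\Omega}-1)/(i\Omega)\bigr)=\sin(t_{0}\Omega)/\Omega\ge c\,t_{0}$, independent of the triple. A standard count produces a sub-interval of $k$ of length $\gtrsim N^{1-\alpha}$ on which the number of admissible triples in $I_{N}^{3}$ is $\gtrsim N^{2(1-\alpha)}$; since all these real parts carry the same sign, passing to the real part of the Fourier coefficient and summing in $k$ gives
\[
   \|\Psi_{3}[\phi_{N}](t_{0})\|_{H^{s}(\T)}^{2}\gtrsim t_{0}^{2}\,N^{2s}\cdot N^{1-\alpha}\cdot N^{4(1-\alpha)}.
\]
Divided by $\|\phi_{N}\|_{H^{s}(\T)}^{3}$, the resulting ratio is $\gtrsim t_{0}\,N^{2((1-\alpha)/2-s)}$, which diverges as $N\to\infty$ precisely when $s<s_{g}$, contradicting the trilinear bound above.

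The main technical hurdle is the resonance bookkeeping: the cancellation of the linear Taylor coefficient rests entirely on the trilinear constraint $m=m_{1}-m_{2}+m_{3}$, and the window width $N^{1-\alpha}$ is exactly the Galilean scale at which the residual quadratic resonance stays of order $1$ --- a shorter window would reduce the count of contributing triples, a longer one would make the phase oscillate and destroy the constructive interference. Once this matching is set up, the reduction of $C^{3}$ regularity to a trilinear estimate and the ensuing $H^{s}$ summation are both standard.
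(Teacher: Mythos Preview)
Your proposal is correct and uses the same test data and the same underlying mechanism as the paper: Fourier support of width $N^{1-\alpha}$ centred at frequency $N$, with the resonance function $\Omega$ uniformly $O(1)$ on this window so that the cubic interaction is essentially non-oscillatory. The only difference is presentational: you carry out the Taylor expansion of $|k|^{2\alpha}$ directly on the Fourier side and exploit positivity of $\sin(t_{0}\Omega)/\Omega$, whereas the paper packages the same cancellation as an approximate Galilean identity $S(t)M_{n}f \approx e^{in^{2\alpha}t}M_{n}T_{bt}f$ (their Lemma~\ref{lemma:galilean}) and then reduces to the elementary computation of $\|\,|f|^{2}f\,\|_{2}$ for the low-frequency profile $f$. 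The paper's route avoids the sign bookkeeping and makes the role of the pseudo-Galilean symmetry more transparent; your route is slightly more self-contained. Either way the final growth rate $t_{0}N^{2(\frac{1-\alpha}{2}-s)}$ and the conclusion are the same.
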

 Our paper is organized as follows: in \cref{sec:2}, we introduce some notation and recall some basic definitions, in \cref{sec:3} we prove linear and bilinear Strichartz estimates, in \cref{sec:4} we give the proof of Theorem \ref{theorem:main_gwp_theorem_n=1} using the $I$-method and in \cref{sec:5} we prove Theorem \ref{theorem:main_illposedness_theorem_n=1}.
\section{Preliminaries and Notation} \label{sec:2}
 We  use $A \lesssim B$ to indicate an estimate of the form $A \leq CB$ for some constant $C>0$. When we want to stress that the constant depends on a parameter, say $p$, we write  $A \lesssim_p B$. If  both $A \lesssim B$ and $B \lesssim A$ hold, we write $A \sim B$. Furthermore, $A \ll B$ signifies that $A \leq cB$ for some small constant $0<c<1$. We also use the shorthand notation $a\pm$ to denote $a \pm \epsilon$ for some $0<\epsilon \ll 1$. This notation is employed only in proofs, and in the formal statements of theorems we always write the precise exponent. Finally, $\mathbb{N}$ denotes the set of natural numbers and we write $\mathbb{N}_0:=\mathbb{N} \cup \{0\}$ for the set of non-negative integers. \\ %We often suppress the dependence of constants on the mass and energy, i.e. if $u$ is a solution to (\ref{eq:fractional_main_lwp}) we will write $\norm{u(t)}_{L^2} \lesssim_{\norm{u(t)}_{L^2}} 1$ etc. This is of course justified by the fact that the latter quantities are conserved.\\
Define $\T_\lambda:= \R/\left(2\pi \lambda \Z\right)$ where $\lambda\geq 1$ will be assumed throughout the remainder of this paper. For $f: \T_\lambda \rightarrow \mathbb{C}$ measurable we define Lebesgue norms by
\begin{equation*}
    \norm{f}_{L^p(\T_\lambda)}^p := \int_{\T_\lambda} \abs{f(x)}^p \,\, dx,
\end{equation*}
for $1 \leq p < \infty$ and the usual modification for $p=\infty$. \\
We next define $\left(dk\right)_\lambda$ to be the normalized counting measure on $\Z_\lambda := \frac{1}{\lambda}\Z$
\begin{equation*}
    \int_{\Z_\lambda} a(k) \left(dk\right)_\lambda := \frac{1}{2\pi\lambda} \sum _{k \in \Z_\lambda} a(k).
\end{equation*}
The Fourier coefficients of $f \in L^1(\T_\lambda)$ are given by
\begin{equation*}
    \hat{f}(k) = \int_{\T_\lambda} e^{-i k x} f(x) \, \, dx,
\end{equation*}
for $k \in \Z_\lambda$ and the Fourier inversion formula is given by
\begin{equation*}
    f(x)= \int_{\Z_\lambda} e^{i k x} \hat{f}(k)\left(dk\right)_\lambda.
\end{equation*}
Using this convention, the following identities hold
\begin{subequations}
\begin{align*}
 %\label{def: Parseval}
   & \int_{\T_\lambda} f(x) \overline{g(x)} \,\, dx = \int_{\Z_\lambda} \hat{f}(k) \overline{\hat{g}(k)} \,\, (dk)_\lambda,  \\ %\tag{Parseval}  \\ 
    %\label{def:Plancharel} 
&\norm{f}_{L^2(\T_\lambda)}=\lVert \hat{f}\rVert _{L^2((dk)_\lambda)}, \\ %\tag{Plancherel} \\
%\label{def: Convolution}
   &\widehat{fg}(k) = \int_{\Z_\lambda} \hat{f}(k_1) \hat{g}(k-k_1) \,\, (dk_1)_\lambda.  %\tag{Convolution}
\end{align*}
\end{subequations}
The Sobolev space, $H^s=H^s(\T_\lambda)$, is defined as the completion of smooth functions under the norm
\begin{equation*}
    \norm{f}_{H^s}=\norm{\langle k \rangle^s \hat{f}(k)}_{L^2((dk)_\lambda)},
\end{equation*}
where $\langle k \rangle := (1+\abs{k}^2)^\frac{1}{2}$. We will frequently make use of Littlewood-Paley theory which allows us to quantitatively separate the rough, high-frequency behavior of a function from its smooth, low-frequency part. In particular, we denote by $P_N$ the Fourier projection operator  
\begin{equation*}
    \widehat{P_N f}:= \begin{cases}
        \hat{f}(k) &\text{ for } \frac{N}{2} \leq \abs{k} < N,\\
        0         &\text{ otherwise }
    \end{cases}
\end{equation*}
when $N>1$ and $P_1$ is defined to be the projection onto frequencies $[-1,1]$.
% Also recall the Littlewood-Paley inequality
% \begin{equation*}
%     \norm{f}_{L^p(\T_\lambda)} \sim_p \norm{\left(\sum_N \abs{P_N f}^2 \right)^{\frac{1}{2}}}_{L^p(\T_\lambda)},
% \end{equation*}
% for all $1<p<\infty$. \\
 We define $S(t)$ to be the solution operator to the fractional linear Schrödinger equation
\begin{equation*}
    i \partial_t u + (-\Delta)^\alpha u =0, \text{ } u(x,0)=u_0(x), \text{  } x \in \T_\lambda,
\end{equation*}
that is,
\begin{equation*}
    S(t) u_0(x) = \int_{\Z_\lambda} e^{ i(k x+\abs{k}^{2\alpha}t)} \hat{u}_0(k) (dk)_\lambda.
\end{equation*}
\subsection{The $I$-operator} \label{sec: I-operator}
We recall the main definitions and conventions used in the construction of the $I$-operator. Define $m_1$ as the restriction to $\Z_\lambda$ of the smooth and monotone multiplier satisfying
\begin{equation*}
    m_1(x):=\begin{cases}
        1 &\text{ if } \abs{x} \leq 1, \\
        \abs{x}^{-1} &\text{ if } \abs{x} > 2,
    \end{cases}
\end{equation*}
and $\sup_{x \in \R} \abs{x^2m_1''(x)} \lesssim 1$. For $N>0$, set $m_N(x):=m_1(\frac{x}{N})$ and for $\beta\geq 0$, we let $I_N^\beta$ denote the Fourier multiplier satisfying $\widehat{I_N^\beta f}:=m^\beta_N\hat{f}$. Additionally, note that $I_N^\beta$ is a smoothing operator of degree $\beta$
\begin{equation*}
    \norm{u}_{H^{s_0}(\T_\lambda)} \lesssim_{\beta} \norm{I_N^\beta u}_{H^{s_0+\beta}(\T_\lambda)}\lesssim_{\beta} N^{\beta}  \norm{u}_{H^{s_0}(\T_\lambda)},  \text{   } \forall s_0 \in \R.
\end{equation*}
We also recall an interpolation lemma which is useful for proving local well-posedness. To this end, for every $x \in \T_\lambda$ define $T_x$ to be the translation operator $$T_x u(x',t):=u(x'+x,t).$$ A Banach space $X$, of spacetime functions defined on $\T_\lambda \times \R$, is translation invariant if $\norm{T_x u}_X = \norm{u}_X$ for all $u \in X$ and all $x \in \T_\lambda$. Similarly, an $n$-linear operator $F$ is said to be translation invariant if
\begin{equation*}
    F(T_x u_1, \dots, T_x u_n) = F(u_1, \dots, u_n),
\end{equation*}
for all $x \in \T_\lambda$ and all $u_1, \dots,  u_n \in X$.
\begin{lemma} \cite[Lemma 12.1]{Invariant_Lemma}
\label{lemma: I-method_invariant_lemma}
    Let $\beta>0$, $n\geq \N$ and suppose that $Z, X_1, \dots X_n$ are translation invariant Banach spaces and $F$ is a translation invariant $n$-linear operator such that the following estimate holds
    \begin{equation*}
        \norm{I_1^\beta F(u_1,\dots,u_n)}_Z \lesssim \prod_{j=1}^n \norm{I_1^\beta u_j }_{X_j},
    \end{equation*}
    for all $0 \leq \beta \leq \beta_0$ and $u_j \in X_j$. Then one has the estimate,
        \begin{equation*}
        \norm{I_N^\beta F(u_1,\dots,u_n)}_Z \lesssim \prod_{j=1}^n \norm{I_N^\beta u_j }_{X_j},
    \end{equation*}
    for all $0 \leq \beta \leq \beta_0$,  $u_j \in X_j$ and $N \geq 1$, with explicit constant independent of $N$.
\end{lemma}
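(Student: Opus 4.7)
The plan is to reduce the $N$-scale estimate to the $N{=}1$ estimate via a Littlewood--Paley dyadic decomposition. Using the scaling identity $g_N^\beta(k)=g_1^\beta(k/N)$ and the smoothness bound $|x^2 g_1''(x)|\lesssim 1$, one first checks that on a dyadic frequency annulus $\{|k|\sim M\}$ the multiplier $I_N^\beta$ acts, up to negligible smoothing error, as multiplication by the constant
\[
g_N^\beta(M)\ \sim\ \min(1,\,N/M)^\beta,
\]
with implicit constants uniform in $N\geq 1$ and $\beta\in[0,\beta_0]$. The same scaling also gives the key pointwise identity $g_N^\beta/g_1^\beta \sim \min(|k|,N)^\beta$ for $|k|\geq 1$, which is the quantitative comparison between the two weights at the heart of the argument.

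Given a decomposition $u_j=\sum_{M_j}P_{M_j}u_j$ at dyadic scales, one applies the $N{=}1$ hypothesis to each product of blocks to obtain
\[
g_1^\beta(M)\,\|P_M F(P_{M_1}u_1,\dots,P_{M_n}u_n)\|_Z \lesssim \prod_{j=1}^n g_1^\beta(M_j)\,\|P_{M_j}u_j\|_{X_j},
\]
where $M$ denotes the output frequency (which satisfies $|M|\lesssim\sum_j|M_j|$ by \eqref{def: Convolution}). Multiplying both sides by $g_N^\beta(M)/g_1^\beta(M)$ reduces the desired conclusion to the pointwise inequality
\[
\frac{g_N^\beta(M)}{g_1^\beta(M)} \lesssim \prod_{j=1}^n \frac{g_N^\beta(M_j)}{g_1^\beta(M_j)} \quad \text{whenever } |M|\lesssim \sum_j|M_j|.
\]
Using $g_N^\beta/g_1^\beta\sim\min(|k|,N)^\beta$, this comparison reduces to checking $\min(|M|,N)^\beta \lesssim \prod_j\min(|M_j|,N)^\beta$, which follows by case analysis (all $M_j$ above $N$, all below, or mixed) up to a harmless constant depending only on $n$ and $\beta_0$. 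Summing over dyadic scales $M$ and $M_j$ in the translation-invariant spaces $X_j,Z$, which admit the standard Littlewood--Paley square-function controls afforded by their translation invariance and the invariance of $F$, yields the conclusion with a constant independent of $N$.

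The main obstacle is making precise the ``multiplication by a constant on dyadic blocks'' approximation. Rigorously, one must control the commutator $I_N^\beta P_M - g_N^\beta(M)\,P_M$ uniformly in $N$, which requires the $C^2$ estimate $|x^2 g_1''(x)|\lesssim 1$ via a smooth refinement of the Littlewood--Paley partition. It is at this point that the hypothesis being available for the \emph{full range} $\beta\in[0,\beta_0]$ is essential: the commutator error has a slightly smaller effective $\beta$, and one absorbs it by interpolating the hypothesis between two nearby values of $\beta$. Beyond this, translation invariance of $F$, $X_j$, and $Z$ is used repeatedly to commute $P_M$ with $F$ and to ensure that the square-function summation in the $X_j$-norms is admissible.
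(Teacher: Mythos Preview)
The paper does not prove this lemma; it merely quotes it from \cite{Invariant_Lemma}, so there is no in-paper proof to compare against. I will therefore assess your argument on its own terms.

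Your approach has a genuine gap at the final summation step. You assert that the translation-invariant spaces $X_j,Z$ ``admit the standard Littlewood--Paley square-function controls afforded by their translation invariance.'' This is not true in general: translation invariance of a Banach space of functions on $\T_\lambda$ only guarantees that convolution with an $L^1$ kernel (hence any smooth frequency projection) is bounded with norm at most the $L^1$-norm of the kernel. It does \emph{not} give you an $\ell^2$-square-function equivalence $\|u\|_X \sim (\sum_M \|P_M u\|_X^2)^{1/2}$; the spaces $L^1(\T)$ and $L^\infty(\T)$ are translation invariant but famously fail this. Without such an equivalence, when you sum your dyadic estimate over the scales $M,M_1,\dots,M_n$ you pick up at best a factor $\sim (\log N)^n$, which is precisely the loss the lemma is designed to avoid. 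A second, related issue is that your frequency-localized estimate $g_1^\beta(M)\|P_M F(P_{M_1}u_1,\dots)\|_Z \lesssim \prod_j g_1^\beta(M_j)\|P_{M_j}u_j\|_{X_j}$ does not follow from the hypothesis as stated: the hypothesis is a single global inequality, and extracting a block-diagonal version of it already presupposes the square-function structure you are trying to use.

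The argument in the cited reference avoids dyadic summation altogether. One exploits translation invariance only through the fact that any Fourier multiplier whose symbol is the Fourier transform of a finite measure is bounded on every translation-invariant space, with operator norm controlled by the total variation of that measure. The key computation is then to realise $I_N^\beta$ as a superposition (uniform in $N$) of operators of the form $T\, I_1^{\beta'} \,T'$ with $\beta'\in[0,\beta_0]$ and $T,T'$ multipliers of this bounded type; the availability of the hypothesis for the full range of $\beta'$ enters here, not as an interpolation device for commutator errors. This is where the smoothness and monotonicity assumptions on $g_1$ are actually used.
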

Throughout the remainder of the paper, we write $m:=m_N^{\alpha-s}$ and $I:=I_N^{\alpha-s}$.
\subsection{Function spaces}
As we will be working with long-time Strichartz estimates, it is convenient to prove local well-posedness results in modified $X^{s,b}$ spaces. One option would be the adapted functions spaces $U^p$ and $V^p$ which were developed for the critical regularity well-posedness theory of dispersive equations on periodic domains \cite{Herr_Tartaru_Tzvetkov,Hadac_Herr_Koch,Koch_Tataru} and were more recently used to prove global well-posedness for the mass critical NLS on $\T$ \cite{Schippa_mass_critical} and $\TT$ \cite{Herr_Kwak_2024_mass_critical} in the sub-critical setting. However, when using the $I$-method, these spaces require non-trivial resonant analysis as they are not $L^2$ based. Instead, we consider a rescaled Bourgain space adapted to the interval $[0,T]$. 
\begin{definition}
For any $b \in \mathbb{R}$ and $T>0$, we define the rescaled Bourgain space $X^{s,b}_T$, as the completion of Schwarz functions $f: \R \times \T_\lambda \rightarrow \mathbb{C}$ under the norm
\begin{equation*}
   \norm{u}_{X^{s,b}_T}^2:=T^{-1} \int_\R \int_{\Z_\lambda} \langle \omega-\abs{k}^{2\alpha} \rangle_T^{2b} \langle k \rangle^{2s} \abs{\tilde{u}(\omega,k)}^2 \left(dk\right)_\lambda d\omega,
\end{equation*}
where $\langle \omega \rangle_T:=\left(1+T^2\omega^2\right)^\frac{1}{2}$ and $\tilde{u}(\omega,k)$ denotes space-time Fourier transform
\begin{equation*}
    \tilde{u}(\omega,k):=\int_\R \int_{\T_\lambda} u(t,x) e^{i(\omega t+kx)} dx dt.
\end{equation*} 
For a time interval $J \subset \R$ of length $\abs{J}=T$, we define the time-restricted norm
\begin{equation*}
    \norm{f}_{X^{s,b}_T(J)}:= \inf \left\{\norm{g}_{X^{s,b}_T} : f\equiv g \text{ on } J \times \T_\lambda \right\}.
\end{equation*}
\end{definition}
  We remark that the space $X^{s,b}_T$ is defined so that it is invariant under the corresponding time rescaling: if we set $g(t,x):=f(t/T, x)$, then $\norm{g}_{X^{s,b}_T}=\norm{f}_{X^{s,b}_T}$. This property ensures that smoothed-out indicator functions adapted to intervals of length $T$ have $X^{s,b}_T$ norm $\sim 1$ (see Lemma \ref{lemma: X(s,b) properties}(a)). \\
We also record the some important properties of the $X^{s,b}_T$ spaces which will be useful for converting Strichartz estimates to $L^p$ norm estimates (see \cref{sec:3}) and for proving local well-posedness theorems (see \cref{sec:4.1}). To this end, it is useful to note the following equivalent characterization of the $X^{s,b}_T$ space
\begin{equation*}
    \norm{f}_{X^{s,b}_T}=\norm{S(-\cdot) f(\cdot) }_{H^b_T(\R \rightarrow H^s\left(\T_\lambda)\right)},
\end{equation*}
where $S(-\cdot) f(\cdot)$ denotes the function $t \mapsto S(-t) f(t)$ and $H^b_T$ is the modified Sobolev norm in which the usual weight $\langle \omega \rangle^b $ is replaced with $T^{-\frac{1}{2}}\langle \omega \rangle_T^b $.
\begin{lemma}
\label{lemma: X(s,b) properties} The $X^{s,b}_T$ spaces satisfy the following properties:
\begin{itemize}
    \item[(a)] For any $b \in \R$, there exists a constant $C=C(b)>0$ such that for every $s\in \R$, $T>0$ and $\phi \in H^s$, we have the estimate
        \begin{equation*}
            \norm{S(t)\phi}_{X^{s,b}_T\left([0,T]\right)} \leq C \norm{\phi}_{H^s}.
        \end{equation*} 
    \item[(b)] For any $b, b' \in \R$ satisfying $-\frac{1}{2}<b'$ and $ b \leq 1+b'$, there exists a constant $C=C(b,b')>0$ such that for every $s\in \R$, $T>0$ and $f \in X^{s,b'}$, we have the estimate
        \begin{equation*}
            \norm{\int_0^t S(t-t') f(t') \,\,dt'}_{X^{s,b}_T\left([0,T]\right)}  \leq CT \norm{f}_{X^{s,b'}_T}.
        \end{equation*}
\end{itemize}
\begin{proof}
    For (a), it suffices to observe that a fixed smooth, compactly supported cut-off function $\eta_J : \R \to \R$ adapted to the interval $J$ satisfies $\norm{\eta_J}_{H^b_T} \sim 1$. \\
    For part (b), it is clear that suffices to prove
    \begin{equation*}
         \norm{\int_0^t f(t') \,\,dt'}_{H^b_T\left([0,T]\right)}  \leq CT \norm{f}_{H^{b'}_T},
    \end{equation*}
    for all $f \in H^{b'}_T(\R \rightarrow \mathbb{C})$. The proof of this estimate is analogous to \cite[Lemma 3.2]{GINIBRE} and is therefore omitted.
\end{proof}
\end{lemma}
\begin{lemma} 
\label{lemma: transference multilinear X(s,b)}
Let $n \in \Z$, $s\in \R$ and suppose that $Y$ is a Banach space of functions on $\R \times \T_\lambda$ with the property that there exists a constant $C_1>0$ such that
\begin{equation}
    \norm{\prod_{j=1}^n e^{i\omega_j t} S(t) f_j}_Y\leq C_1 \prod_{j=1}^n \norm{f_j}_{H^s},
    \label{hypothesis: Y}
\end{equation}
for all $f_1, \dots, f_n \in H^s(\T_\lambda)$ and $\omega_1, \dots , \omega_n \in \R $. Then for any $b>\frac{1}{2}$, there exists a constant $C_2=C_2(b)>0$ such that for any $T>0$ and $u_1, \dots , u_n \in X^{s,b}_T$ we have the following estimate
\begin{equation*}
    \norm{\prod_{j=1}^n u_j}_Y \leq C_1 C_2\prod_{j=1}^n \norm{u}_{X^{s,b}_T},
\end{equation*}
where the constant $C_1$ is the same as the one in \eqref{hypothesis: Y}.
\begin{proof}
The proof is analogous to \cite[Chapter 2, Lemma 2.9]{Tao_disperive_book}. By Fourier inversion, we have
\begin{equation*}
    u_j(x,t)=\frac{1}{2\pi} \int_\R \int_{\Z_\lambda} \widetilde{u}_j(\omega,k) e^{i(\omega t+kx)} \left(dk_\lambda\right) d\omega,
\end{equation*}
for $j=1, \dots, n$. Writing $\omega=\omega_j+\abs{k}^{2\alpha}$ and setting
\begin{equation*}
    f_{\omega_j}(x) := \int_{\Z_\lambda} \widetilde{u}_j(\omega_j+\abs{k}^{2\alpha},k) e^{ikx}\left(dk_\lambda\right),
\end{equation*}
we have the representation
\begin{equation*}
    u_j(t)=\frac{1}{2\pi} \int_\R e^{i\omega_j t}S(t) f_{\omega_j} d\omega_j.
\end{equation*}
By \eqref{hypothesis: Y} and Minkowski's inequality, followed by Cauchy-Schwarz, we obtain
\begin{align*}
    \norm{\prod_{j=1}^n u_j}_Y &\lesssim C_1 \prod_{j=1}^n \int_\R \norm{f_{\omega_j}}_{H^s} d\omega_j, \\
    &\leq  C_1  \prod_{j=1}^n  \left(\int_\R T \langle \omega_j \rangle_T^{-2b} d\omega_j\right)^{\frac{1}{2}} \norm{u_j}_{X^{s,b}_T},  \\
     &\lesssim  C_1 \prod_{j=1}^n \norm{u_j}_{X^{s,b}_T}. \qedhere
\end{align*}
\end{proof}
\end{lemma}
As a corollary of Lemma \ref{lemma: transference multilinear X(s,b)} with $n=1$ and $Y = C^0([0,T]; H^s)$, we obtain the continuous embedding $X^{s,b}_T \hookrightarrow C^0([0,T]; H^s)$ for every $T>0$.
\begin{corollary}
\label{corollary: C^0 embedding}
   For any $b>\frac{1}{2}$, there exists a constant $C=C(b)>0$ such that for every $s\in \R$, $T>0$ and $u \in X^{s,b}_T\left([0,T]\right)$, the following estimate holds
\begin{equation*}
   \sup_{t \in [0,T] } \norm{u(t)}_{H^s} \leq C \norm{u}_{X^{s,b}_T\left([0,T]\right)}.
\end{equation*}
\end{corollary} 
\section{Strichartz Estimates}\label{sec:3}
 Throughout this section, all frequency scales $N$ appearing in the projections $P_N$ are assumed to satisfy $N \geq 1$, unless stated otherwise. We also recall that $\frac{1}{2}<\alpha<1$ and $\lambda \geq 1$. \\
 In light of the rescaling symmetry inherent to the problem, it is natural to establish Strichartz estimates on the rescaled torus $\T_\lambda$. To facilitate this, we begin by stating a rescaling lemma that enables us to transfer the analysis back to the standard torus $\T$.
\begin{lemma}
\label{Lemma: Conversion_between_T_and_T_lambda}
    Let $T>0$, $n\in \N$ and $N_1, \dots N_n >0$. Then the following two statements are equivalent:
    \begin{enumerate}
        \item[(i)] There exists a constant $C_1(T, N_1, \dots , N_n)>0$ such that for all $f_j \in L^2(\T)$ satisfying $\supp \hat{f}_j \subset \left\{k \in \Z:  \frac{N_j}{2} \leq \abs{k} < N_j \right\}$ for $1\leq j \leq n$, we have the following estimate    
    \begin{equation*}
        \int_{[0,T] \times \T} \prod_{j=1}^n \abs{S(t)f_j}^2 \leq C_1(T, N_1, \dots , N_n) \prod_{j=1}^n \norm{f_j}_{2}^2.
    \end{equation*}
    \item[(ii)] There exists a constant $C_\lambda(T, N_1, \dots , N_n)>0$ such that for all $\phi_j \in L^2(\T_\lambda)$ satisfying $\supp \hat{\phi}_j \subset \left\{k \in \Z_\lambda: \frac{N_j}{2} \leq \abs{k} < N_j \right\}$ for $1\leq j \leq n$, we have the following estimate
     \begin{equation*}
        \int_{[0,T] \times \T_\lambda} \prod_{j=1}^n \abs{S(t)\phi_j}^2 \leq C_\lambda(T, N_1, \dots , N_n)\prod_{j=1}^n \norm{\phi_j}_{2}^2.
\end{equation*}
\end{enumerate}
Moreover, the constants are related by
\begin{equation*}
    C_\lambda(T, N_1, \dots , N_n)=\lambda^{1-n+2\alpha}C_1(\lambda^{-2\alpha}T, \lambda N_1, \dots , \lambda N_n).
\end{equation*}
\begin{proof}
We will only prove that (i) $\implies$ (ii) as the reverse direction is trivial. Suppose that $\phi_j \in L^2(\T_\lambda)$ are such that $\supp \hat{\phi}_j \subset \left\{k \in \Z_\lambda: \frac{N_j}{2} \leq \abs{k} < N_j\right\}$ for $1\leq j \leq n$. For fixed $1\leq j \leq n$, define $f_j(x):=\phi_j(\lambda x)$ and note that ${\hat{f}_j(k)=\lambda^{-1} \hat{\phi}_j(\lambda^{-1} k)}$ for all $k \in \Z$ and $\norm{f_j}_{L^2(\T)}^2 = \lambda^{-1} \norm{\phi_j}_{L^2(\T_\lambda)}^2$. Therefore,
    \begin{align*}
        \int_{[0,T] \times \T_\lambda} \prod_{j=1}^n \abs{S(t) \phi_j}^2 &= \lambda^{1+2\alpha} \int_{[0,\lambda^{-2\alpha}T] \times \T} \prod_{j=1}^n \abs{S(t)f_j}^2, \\
        &\leq \lambda^{1+2\alpha} C(\lambda^{-2\alpha}T, \lambda N_1, \dots , \lambda N_n) \prod_{j=1}^n \norm{f_j}_{L^2(\T)}^2, \\
        & =  \lambda^{1-n+2\alpha} C(\lambda^{-2\alpha}T, \lambda N_1, \dots , \lambda N_n) \prod_{j=1}^n \norm{\phi_j}_{L^2(\T_\lambda)}^2,
    \end{align*}
where the first line follows by a coordinate change, the second line follows from statement (i) and the last line follows by changing coordinates again.
\end{proof}
\end{lemma}
We proceed by establishing sharp bilinear Strichartz estimates via an elementary counting argument. The latter yields a sharp $L_{t,x}^4$ estimate, which represents an improvement over the estimate proved in \cite{demirbas_fractional} which incurred an extra $\epsilon$ derivative loss. Similar bilinear estimates can be found in \cite{Schippa_fractional} but these are only valid for short time intervals that are frequency dependent. Instead, we prove an inequality that is valid for all time and generalizes the bilinear estimate proved for $\alpha=1$ in \cite{mass_critical_gwp_staffilani_tzirakis}. In this respect, we first state an elementary counting lemma which will be used repeatedly in the remainder of this section.
\begin{lemma}
    \label{Lemma: elementary counting lemma}
   Let $a,b \in \Z$ with $a<b$ and let $B \subset \R$ be an interval of bounded length. For $f: \R \rightarrow \R$, define $M_f:= \# \left\{ k \in [a,b]\cap \Z : f(k) \in B \right\}$. Then, there exists a constant $C>0$ such that:
   \begin{enumerate}
        \item[(i)] If $f \in C^1\left([a,b]\right)$ and $\min_{x \in [a,b]} \abs{f'(x)} \geq r>0$, then $$M_f \leq C\left( 1+r^{-1}\right).$$
        \item[(ii)] If $f \in C^2\left([a,b]\right)$ and $\min_{x \in [a,b]}f''(x) \geq r>0$, then $$M_f \leq C \left(1+r^{-\frac{1}{2}}\right).$$
    \end{enumerate}
\begin{proof}
The estimates in (i) and (ii) are straightforward applications of the mean value theorem - see for example \cite{Graham_Kolesnik_1991}.
\end{proof}
\end{lemma}
\begin{lemma}
    \label{lemma_Strichartz_bilinear}
    Let $\phi_1, \phi_2 \in L^2(\T_\lambda)$ be such that $\phi_1  = P_{N_1} \phi_1$  and $\phi_2=P_{N_2} \phi_2$. Then there exists a constant $C=C(\alpha)>0$ such that for any $T>0$, the following estimate holds
\begin{equation*}
\norm{S(t)\phi_1 S(t)\phi_2}_{L_{t,x}^2([0,T] \times \T_\lambda)} \leq C\left(\frac{T}{\lambda}+T^\frac{1}{2}\min\left(N_1,N_2\right)^{1-\alpha}\right)^{\frac{1}{2}} \norm{\phi_1}_2\norm{\phi_2}_2.
\end{equation*}
Moreover, if $ \max\left(N_1,N_2\right)\geq 4\min\left(N_1,N_2\right)$, the following refined estimate holds
    \begin{equation*}
        \norm{S(t)\phi_1 S(t)\phi_2}_{L_{t,x}^2([0,T] \times \T_\lambda)} \leq C\left(\frac{T}{\lambda}+\max\left(N_1,N_2\right)^{1-2\alpha}\right)^{\frac{1}{2}} \norm{\phi_1}_2 \norm{\phi_2}_2.
    \end{equation*}
\begin{proof}
By Lemma \ref{Lemma: Conversion_between_T_and_T_lambda} it suffices to prove the statement for $\lambda=1$. Next note that,
\begin{equation*}
     \norm{S(t)\phi_1 S(t)\phi_2}_{L_{x}^2(\T)}^2 = \sum_{k,k_1,k_2} \widehat{\phi}_1(k_1)\overline{\widehat{\phi}_1(k_2)}\widehat{\phi}_2(k-k_1)\overline{\widehat{\phi}_2(k-k_2)} e^{it\psi(k,k_1,k_2)}
\end{equation*}
where the summation is restricted to 
\begin{equation*}
    N_1 \leq \abs{k_1}, \abs{k_2}\leq2N_1 \text{ and }  N_2 \leq \abs{k-k_1}, \abs{k-k_2} \leq 2N_2
\end{equation*} 
and we define $\psi(k,k_1,k_2):=\abs{k_1-k}^{2\alpha}+\abs{k_1}^{2\alpha}-\abs{k_2-k}^{2\alpha}-\abs{k_2}^{2\alpha}$. Let $\eta$ be a smooth cut-off function such that $\eta(t)=1$ for $\abs{t} \leq 1$ and $\eta(t)=0$ for $\abs{t}>2$ and define $\eta_T(t):=\eta(t/T)$. Integrating in time,
\begin{equation*}
\int_{[0,T] \times \T} \abs{S(t)\phi_1S(t)\phi_2}^2 \leq \sum_k \sum_{k_1,k_2}  \widehat{\phi}_1(k_1)\overline{\widehat{\phi}_1(k_2)}\widehat{\phi}_2(k-k_1)\overline{\widehat{\phi}_2(k-k_2)} \hat{\eta}_{T}(\psi(k,k_1,k_2)). 
\end{equation*}
The inner summation in $(k_1,k_2)$ is estimated using Schur's test, where for fixed $k \in \Z$, we view $\abs{\hat{\eta}_{T}(\psi(k,k_1,k_2))}$ as the kernel acting on the sequence $$k_1 \mapsto \abs{\widehat{\phi}_1(k_1)\widehat{\phi}_2(k-k_1)}$$ in $\ell^2$. Therefore, it suffices to bound
\begin{equation*}
    T\sup_{k, k_1} \sum_{k_2} \abs{\hat{\eta}\left(T\psi(k,k_1,k_2)\right)}.
\end{equation*}
which is equivalent to estimating
\begin{equation*}
    \sup_{\substack{J \subset \R \\ \abs{J} \lesssim 1}}\sup_{k, k_1 \in \Z}\# \left\{k_2 \in \Z: \abs{k_2} \sim N_1, \abs{k-k_2} \sim N_2, T\psi(k,k_1,k_2) \in J \right\}.
\end{equation*}
We can now apply Lemma \ref{Lemma: elementary counting lemma} with $f_k(x):=T\abs{x}^{2\alpha}+T\abs{x-k}^{2\alpha}$ for fixed $k\in \Z$. Note that without loss in generality we may assume that $\min\left(N_1, N_2\right) \gg 1$ (otherwise the counting problem is trivial).\\
For all $\abs{x} \sim N_1$ and $\abs{x-k} \sim N_2$, we have the estimate $ f''(x) \gtrsim TN_1^{2\alpha-2}$ and so,
\begin{equation*}
    \int_{[0,T] \times \T} \abs{S(t)\phi_1S(t)\phi_2}^2 \lesssim \left(T+T^\frac{1}{2}\min\left(N_1,N_2\right)^{1-\alpha}\right)\norm{\phi_1}_2^2 \norm{\phi_2}_2^2.
\end{equation*}
Now suppose that $4 \min\left(N_1,N_2\right)\leq \max\left(N_1,N_2\right)$. Then it is clear that for all $\abs{x} \sim N_1$ and $\abs{x-k} \sim N_2$, we have $\abs{f'(x)} \gtrsim T\max\left(N_1,N_2\right)^{2\alpha-1}$ which yields that,
\begin{equation*}
    \int_{[0,T] \times \T} \abs{S(t)\phi_1S(t)\phi_2}^2 \lesssim \left(T+\max\left(N_1,N_2\right)^{1-2\alpha}\right) \norm{\phi_1}_2^2 \norm{\phi_2}_2^2.
\end{equation*}
\end{proof}
\end{lemma}
\begin{remark}
Note that for $\phi \in L^2(\T)$ satisfying $\phi=P_N \phi$, Lemma \ref{lemma_Strichartz_bilinear}, together with Hölder's inequality, implies the bound
\begin{equation*}
    \norm{S(t)\phi}_{L^4_{t,x}\left([0,T] \times \T \right)} \lesssim \left(T+T^\frac{1}{2}N^{1-\alpha}\right)^\frac{1}{4} \norm{\phi}_2,
\end{equation*}
which represents an improvement by $N^\epsilon$ over the $L_{t,x}^4$ bound proved in \cite{demirbas_fractional}.
\end{remark} 
\begin{remark}
    We also stress that the constants in Lemma \ref{lemma_Strichartz_bilinear} are sharp for all $T>0$ when $N_1 \sim N_2$ and for large enough $T$ when $N_1 \gg N_2$. To this end, we only consider the case $\lambda=1$ as the sharp examples for $\lambda > 1$ follow from rescaling. If $\phi_j(x)=e^{iN_j x}$ for $j=1,2$ then clearly,
\begin{equation*}
    \int_{[0,T] \times \T} \abs{S(t)\phi_1(x) S(t) \phi_2(x)}^2 dx dt \sim T,
\end{equation*}
whereas $\norm{\phi_1}_2^2\norm{\phi_2}_2^2 \sim 1$ proving that the factor $T$ is sharp. Now consider the case $N_1 \gg N_2$ and $T\geq\left(N_1 N_2\right)^{1-2\alpha}$. For $j=1,2$, define $\phi_j(x)=\sum_{k=N_j}^{N_j+M_j}e^{ikx}$ where $M_1=\lfloor (N_1N_2)^\frac{2\alpha-1}{2}\rfloor$ and $M_2=\lfloor N_2^{2\alpha-1}\rfloor$. It follows that for $j=1,2$,
\begin{equation*}
    \abs{S(t)\phi_j(x)} = \abs{\sum_{k=0}^{M_j} e^{i\left((x+2\alpha N_j^{2\alpha-1}t)k+2\alpha(2\alpha-1)N_j^{2\alpha-2}k^2t+R_{\alpha}(k,N_j)t\right)}},
\end{equation*}
where the remainder term satisfies $\abs{R_\alpha(k,N_j)}\lesssim N_j^{2\alpha-3}k^3$. Therefore, by the uncertainty principle, $\abs{S(t)\phi_1(x)} \gtrsim M_j$ on $\abs{x+2\alpha N_j^{2\alpha-1}t}\lesssim M_j^{-1}$ and $t \lesssim M_j^{-2}$ and so
\begin{equation*}
    \int_{[0,T] \times \T} \abs{S(t)\phi_1(x) S(t) \phi_2(x)}^2 dx dt \gtrsim M_1^2 M_2^2 M_1^{-3} = N_1^\frac{1-2\alpha}{2} N_2^\frac{3(2\alpha-1)}{2},
\end{equation*}
whereas $\norm{\phi_1}_2^2\norm{\phi_2}_2^2 \sim N_1^\frac{2\alpha-1}{2} N_2^\frac{3(2\alpha-1)}{2} $ which shows that the term $N_1^{1-2\alpha}$ is sharp. When $T<\left(N_1 N_2\right)^{1-2\alpha}$, the preceding example yields a bound of the order of $TN_2^{2\alpha-1}$ which we conjecture to be sharp. However, we do not pursue this refinement in the present work. \\
When $N_1 \sim N_2 \sim N$ and $T<N^{2(1-\alpha)}$, we let $M_j=T^{-\frac{1}{2}}N^{1-\alpha}$ for $j=1,2$. An analogous argument to the one above proves that the constant $TN^{1-\alpha}$ is sharp.
\end{remark}
As we will be using the $I$-method to prove global well-posedness, we now recall the linear $L_{t,x}^6$ Strichartz estimate which will be used when bounding the growth of the modified energy (see Lemma \ref{lemma: growth E2}). The latter can be proved by a direct application of the discrete $L^2$-restriction Theorem \cite[Theorem 2.2]{Bourgain_Demeter_decoupling} - for example see \cite{Schippa_fractional}. 
\begin{lemma}\cite[Proposition 1.1]{Schippa_fractional}
    \label{lemma_Strichartz_Linear}
   Let $\phi \in L^2(\T_\lambda)$ be such that $\phi=P_N\phi$. Then, for every $\epsilon>0$, there exists a constant $C=C(\alpha,\epsilon)>0$ such that for any $0<T \leq \lambda^{2\alpha}$, we have the estimate
   \begin{equation*}
         \norm{S(t)\phi}_{L_{t,x}^6([0,T] \times \T_\lambda)} \leq C \lambda^{\epsilon} N^{\frac{1-\alpha}{3}+\epsilon} \norm{\phi}_2.
    \end{equation*} 
\end{lemma}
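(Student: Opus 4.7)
The plan is to combine the rescaling Lemma \ref{Lemma: Conversion_between_T_and_T_lambda}, specialized to $m=3$ with all three input functions equal (so that $\int \prod_{j=1}^3 |S_\lambda(t)\phi|^2 = \|S_\lambda(t)\phi\|_{L^6_{t,x}}^6$), with the $\ell^2$-decoupling inequality of Bourgain-Demeter on the standard torus. In the notation of Lemma \ref{Lemma: Conversion_between_T_and_T_lambda}, the task is reduced to controlling $C_1(\tilde T, N, N, N)$ on $\T$ for $\tilde T \geq 1$, and then transferring the bound back to $\T_\lambda$ via the rescaling identity with $\tilde T := \lambda^{-2\alpha} T$. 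The hypothesis $T \geq \lambda^{2\alpha}$ is exactly what ensures $\tilde T \geq 1$.

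For the unit-time bound on $\T$, I would invoke the discrete $\ell^2$-decoupling theorem applied to the curve $\{(k, |k|^{2\alpha}) : k \in \Z,\ |k|\sim N\}$, whose second derivative is of size $N^{2\alpha - 2}$; this is precisely the input of \cite[Proposition 1.1]{Schippa_fractional}, giving, for $\phi=P_N \phi$ on $\T$,
\begin{equation*}
\|S(t)\phi\|_{L^6_{t,x}([0,1]\times\T)} \lesssim_\epsilon N^{\frac{1-\alpha}{3}+\epsilon}\|\phi\|_2.
\end{equation*}
To promote this to arbitrary $\tilde T\geq 1$, I would partition $[0,\tilde T]$ into $O(\tilde T)$ intervals of unit length and apply the above bound on each subinterval. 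Translating the time origin only replaces $\phi$ by $S(t_0)\phi$, which shares the same $L^2$-norm and frequency localization, so summing the sixth powers yields
\begin{equation*}
\|S(t)\phi\|_{L^6_{t,x}([0,\tilde T]\times\T)} \lesssim_\epsilon \tilde T^{1/6}\, N^{\frac{1-\alpha}{3}+\epsilon}\|\phi\|_2, \qquad \text{i.e.}\quad C_1(\tilde T, N, N, N) \lesssim_\epsilon \tilde T\, N^{2(1-\alpha)+6\epsilon}.
\end{equation*}

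Plugging this into the conversion formula of Lemma \ref{Lemma: Conversion_between_T_and_T_lambda} with $m=3$ and $\tilde T = \lambda^{-2\alpha}T$ yields
\begin{equation*}
C_\lambda(T,N,N,N) = \lambda^{2\alpha - 2}\,C_1(\lambda^{-2\alpha} T,\lambda N,\lambda N,\lambda N) \lesssim_\epsilon \lambda^{-2\alpha + 6\epsilon}\, T\, N^{2(1-\alpha)+6\epsilon},
\end{equation*}
where the powers of $\lambda$ collapse as $(2\alpha - 2) + (-2\alpha) + 2(1-\alpha) = -2\alpha$. Taking sixth roots and relabelling $\epsilon$ gives the claimed inequality, since $\lambda^{-\alpha/3} = (\lambda^{-2\alpha})^{1/6}$ combines with $T^{1/6}$ to produce the factor $(T/\lambda^{2\alpha})^{1/6}$.

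Since decoupling is being invoked as a black box, there is no genuine analytic obstacle: the content of the argument is entirely bookkeeping on exponents of $\lambda$, $T$, and $N$. The one place requiring attention is verifying that the hypothesis $T\geq \lambda^{2\alpha}$ is used precisely to make the time-extension step legitimate on the standard torus, and that the rescaling factors cancel to leave only the expected $\lambda^\epsilon$ and $(T/\lambda^{2\alpha})^{1/6}$ dependencies.
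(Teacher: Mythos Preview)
Your argument is correct and is exactly the approach the paper indicates: the paper does not give its own proof of this lemma but simply cites \cite[Proposition~1.1]{Schippa_fractional} and remarks that it follows from the discrete $\ell^2$-restriction theorem of Bourgain--Demeter, while Lemma~\ref{Lemma: Conversion_between_T_and_T_lambda} is provided precisely to carry out the rescaling you perform. Your bookkeeping of the exponents of $\lambda$, $T$, and $N$ is accurate, and the use of $T\geq\lambda^{2\alpha}$ to ensure $\tilde T\geq 1$ for the unit-interval partition is exactly the point.
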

 We conclude this section by using Lemma \ref{lemma: transference multilinear X(s,b)} to transfer the linear and bilinear Strichartz estimates to $X^{s,b}_T$ spaces.
\begin{lemma}
\label{lemma:Linear X^0 Strichartz}
For every $\epsilon>0$ and $b>\frac{1}{2}$, there exists a constant $C=C(\alpha,b,\epsilon)>0$ such that for any $0<T\leq \lambda^{2\alpha}$, we have the following estimate
    \begin{equation*}
          \norm{u}_{L_{t,x}^6([0,T] \times \T_\lambda)}\leq C\lambda^{\epsilon}N^{\frac{1-\alpha}{3}+\epsilon} \norm{u}_{X^{0,b}_T}.
    \end{equation*} 
\end{lemma}    
\begin{lemma}
    \label{lemma: Bilinear X^0 Strichartz}
For every $b>\frac{1}{2}$ there exists a constant $C=C(\alpha,b)>0$ such that for any $T>0$ and $u_1, u_2 \in X^{0,b}_T$ satisfying $u_1 = P_{N_1} u_1$ and $u_2 = P_{N_2} u_2$, the following estimate holds
    \begin{equation*}
        \norm{u_1 u_2}_{L_{t,x}^2([0,T] \times \T_\lambda)} \leq C  \left(\frac{T}{\lambda}+T^\frac{1}{2}\min\left(N_1,N_2\right)^{1-\alpha}\right)^{\frac{1}{2}}\norm{u_1}_{X^{0,b}_T} \norm{u_2}_{X^{0.b}_T}.      
    \end{equation*}
Moreover, if $\max\left(N_1,N_2\right) \geq 4\min\left(N_1,N_2\right) $, the following refined estimate holds
    \begin{equation*}
        \norm{u_1 u_2}_{L_{t,x}^2([0,T] \times \T_\lambda)} \leq C  \left(\frac{T}{\lambda}+\max\left(N_1,N_2\right)^{1-2\alpha}\right)^{\frac{1}{2}}\norm{u_1}_{X^{0,b}_T} \norm{u_2}_{X^{0.b}_T}.      
    \end{equation*}
\end{lemma}
When $s > \tfrac{1-\alpha}{2}$, the bilinear estimates in Lemma~\ref{lemma: Bilinear X^0 Strichartz} suffice to obtain local well-posedness in $X^{s,b}_T$ by an argument analogous to \cite[Theorem~3]{Burq_Gerard_Tzvetkov}. However, for the endpoint case $s = \tfrac{1-\alpha}{2}$, these estimates are not strong enough in $b$, since the argument in \cite[Theorem~3]{Burq_Gerard_Tzvetkov} relies on interpolation to enlarge the admissible range of $b$ at the expense of spatial regularity, $s$. In our bilinear estimates, the condition $b > \tfrac{1}{2}$ arises from Minkowski's inequality (see the proof of Lemma~\ref{lemma: transference multilinear X(s,b)}). This condition can be improved by establishing an inhomogeneous bilinear Strichartz estimate via a counting argument (see the proof of Lemma~\ref{lemma: Improved_in_b_Bilinear X^0 Strichartz}). Since local well-posedness estimates will ultimately be proved for times $T \leq \lambda^{2}$ (see Lemma~\ref{lemma: lwp_ILNS}), we restrict attention to this regime in the next lemma.
\begin{lemma}
    \label{lemma: Improved_in_b_Bilinear X^0 Strichartz}
For every $\epsilon>0$ there exists a constant $C=C(\alpha,\epsilon)>0$ such that for any $0<T\leq \lambda^2$, $u_1 \in X^{0,\frac{1}{2}+\epsilon}_T$ and $u_2 \in X^{0,\frac{1}{4}}_T$ satisfying $u_1 = P_{N_1} u_1$ and $u_2 = P_{N_2} u_2$, the following estimate holds
    \begin{equation}
        \norm{u_1 u_2}_{L_{t,x}^2([0,T] \times \T_\lambda)} \leq C T^\frac{1}{4}\min\left(N_1,N_2\right)^{\frac{1-\alpha}{2}}\norm{u_1}_{X^{0,\frac{1}{2}+\epsilon}_T} \norm{u_2}_{X^{0,\frac{1}{4}}_T}.   
        \label{eq: bilinear_strichartz_impoved in b}
    \end{equation}
Moreover, if $\max\left(N_1,N_2\right) \geq 4\min\left(N_1,N_2\right)$, we have the following estimate
    \begin{equation}
        \norm{u_1 u_2}_{L_{t,x}^2([0,T] \times \T_\lambda)} \leq C T^\frac{1}{4}\min\left(N_1,N_2\right)^{O(\epsilon)}\norm{u_1}_{X^{0,\frac{1}{2}+\epsilon}_T} \norm{u_2}_{X^{0,\frac{1}{2}-\epsilon}_T}.   
        \label{eq: high_low bilinear_strichartz_impoved_b_improvement}
    \end{equation}
\begin{proof}
We begin by proving the following estimate
\begin{equation}
    \norm{S(t) \phi_1 S(t) f_2}_{L_{t,x}^2([0,T] \times \T_\lambda)} \lesssim T^\frac{1}{4}\min\left(N_1,N_2\right)^{\frac{1-\alpha}{2}}\norm{u_1}_{2} \norm{f_2}_{H^{\frac{1}{4}}_T},
    \label{eq: inhomogenous_strichartz_bilinear}
\end{equation}
where $0<T \leq \lambda^2$, $\phi_1 \in L^2(\T_\lambda)$ satisfies $\phi_1 = P_{N_1} \phi_1$ and $f_2 \in H_T^{\frac{1}{4}}\left(\R \rightarrow L^2(\T_\lambda) \right)$ satisfies $f_2=P_{N_2} f_2$. By Littlewood-Paley theory and by rescaling it suffices to prove
\begin{equation*}
    \norm{S(t) \phi_1 S(t) f_2}_{L_{t,x}^2([0,T] \times \T)}^2 \lesssim T^{-\frac{1}{2}}L^{\frac{1}{2}}\min\left(N_1,N_2\right)^{1-\alpha}\norm{\phi_1}_{2}^2 \norm{f_2}_{2}^2,
\end{equation*}
where $0<T\leq 1$, $\phi_1 \in L^2(\T)$ satisfies $\phi_1 = P_{N_1} \phi_1$ and $f_2 \in L^2(\R \times \T)$ satisfies $$\supp{ \tilde{f}_2} \subset \left\{(\omega,k) \in \R \times \Z : \abs{\omega} \lesssim LT^{-1}, \abs{k} \sim N_2  \right\}.$$
The proof \eqref{eq: inhomogenous_strichartz_bilinear} is then analogous to that of Lemma \ref{lemma_Strichartz_bilinear} but now, for all $0<T\leq 1 $, we need to bound
\begin{equation}
     \sup_{k, k_1} \int_{\abs{\omega} \lesssim L T^{-1} } \sum_{\substack{\abs{k_2} \sim N_1 \\ \abs{k-k_2} \sim N_2}} \abs{\hat{\eta}_T(\psi(k,k_1,k_2)-\omega)} d\omega,
     \label{eq: desired_bound_for_bilinear}
\end{equation}
where $\psi(k,k_1,k_2):=\abs{k_1-k}^{2\alpha}+\abs{k_1}^{2\alpha}-\abs{k_2-k}^{2\alpha}-\abs{k_2}^{2\alpha} $ and $\eta_T$ is a smooth cut-off function adapted to the interval $[-T,T]$. This is done via a counting argument
\begin{align*}
    \eqref{eq: desired_bound_for_bilinear} &\lesssim T \sup_{\substack{J \subset \R \\ \abs{J} \lesssim 1}}\sup_{k, k_1 \in \Z} \sum_{\substack{\abs{k_2} \sim N_1 \\ \abs{k-k_2} \sim N_2}} \int_{\substack{\abs{\omega} \lesssim L T^{-1} \\ T\left(\psi(k,k_1,k_2)-\omega\right)  \in J }  } \, \, d\omega, \\
    &\lesssim \sup_{\substack{J \subset \R \\ \abs{J} \lesssim 1}}\sup_{k, k_1 \in \Z} \#\left\{ k_2 \in  \Z: \abs{k_2} \sim N_1,  \abs{k-k_2} \sim N_2, L^{-1} T \psi(k,k_1,k_2) \in J \right\}, \\
    &\lesssim T^{-\frac{1}{2}}L^{\frac{1}{2}} \min\left(N_1,N_2\right)^{1-\alpha},
\end{align*}
where in the last line we used Lemma \ref{Lemma: elementary counting lemma} with $f_k(x):=L^{-1} T \abs{x}^{2\alpha}+L^{-1} T\abs{k-x}^{2\alpha}$ for fixed $k \in \Z$. Next, we show how \eqref{eq: bilinear_strichartz_impoved in b} is obtained from \eqref{eq: inhomogenous_strichartz_bilinear}. Suppose that $u_1$ and $u_2$ satisfy the assumptions of the lemma. Define $f_2:=S(-t)u_2$ and write (see Lemma \ref{lemma: transference multilinear X(s,b)}),
\begin{align*}
    \phi_{\omega}(x) &:= \int_{\Z_\lambda} \widetilde{u}_1(\omega+\abs{k}^{2\alpha},k) e^{ikx}\left(dk_\lambda\right), \\
    u_1(t)&\phantom{:}=\frac{1}{2\pi} \int_\R e^{i\omega t}S(t) \phi_{\omega} d\omega.
\end{align*}
Then, by Minkowski's inequality,
\begin{equation*}
    \norm{u_1 u_2 }_{L_{t,x}^2([0,T] \times \T_\lambda)} \leq \int_{-\infty} ^\infty \norm{S(t)\phi_{\omega}S(t)f_2}_{L_{t,x}^2([0,T] \times \T_\lambda)}\, \, d\omega.
\end{equation*}
Applying \eqref{eq: inhomogenous_strichartz_bilinear} and recalling that $u_1 \in X_T^{0,\frac{1}{2}+}$, we obtain \eqref{eq: bilinear_strichartz_impoved in b}. \\
We conclude with the proof of \eqref{eq: high_low bilinear_strichartz_impoved_b_improvement}. Note that if $\max\left(N_1, N_2 \right) \geq 4\min\left(N_1,N_2\right)$, then Lemma \ref{lemma: Bilinear X^0 Strichartz} implies that for $T \leq \lambda^2$ 
\begin{equation*}
        \norm{u_1 u_2}_{L_{t,x}^2([0,T] \times \T_\lambda)} \lesssim T^\frac{1}{4}\norm{u_1}_{X^{0,\frac{1}{2}+\epsilon}_T} \norm{u_2}_{X^{0,\frac{1}{2}+\epsilon}_T}.   
        \label{eq: high_low bilinear_strichartz_impoved in b}
\end{equation*}
Interpolating this estimate with \eqref{eq: bilinear_strichartz_impoved in b}, yields \eqref{eq: high_low bilinear_strichartz_impoved_b_improvement}.
\end{proof}
\end{lemma}
\section{Proof of Theorem \ref{theorem:main_gwp_theorem_n=1}}\label{sec:4}
 The aim of this section is to prove Theorem \ref{theorem:main_gwp_theorem_n=1} using the $I$-method. Henceforth, let $\frac{1-\alpha}{2}\leq s<\alpha$ and unless otherwise stated all spatial norms are defined on $\T_\lambda$, i.e. $H^s=H^s(\T_\lambda)$, etc. For some $N \gg 1$ to be established later, we recall that $m:=m_N^{\alpha-s}$ and $I:=I_N^{\alpha-s}$, where $m_N$ and $I_N$ are defined in \cref{sec: I-operator}. Next, we define the first modified energy
\begin{equation*}
    E^1\left(u\right):= E\left(Iu\right)= \frac{1}{2}\int_{\T_\lambda} \abs{\left(-\Delta\right)^\frac{\alpha}{2}Iu(x)}^2 \,\, dx + \frac{1}{4} \int_{\T_\lambda} \abs{Iu(x)}^{4}\,\, dx.
\end{equation*}
As observed in \cite{KdV}, a hierarchy of modified energies can be formally defined for dispersive nonlinear equations. We will use the second modified energy and quantify its growth as a function of $N$.
To do this, it is convenient to introduce some notation. For $n \geq 2$, even, define a spatial multiplier of order $n$ to be a function $M_n(k_1,\dots, k_n)$ on $\Gamma_n=\left\{ (k_1, \dots, k_n) \in \Z_\lambda^n : k_1+\cdots+k_n=0 \right\}$ which is endowed with measure $\prod_{j=1}^{n-1} (dk_j)_\lambda$. Additionally for functions $f_1, \dots,f_n$ on $\T_\lambda$, define
\begin{equation*}
    \Lambda_n(M_n; f_1, \dots,f_n) := \int_{\Gamma_n} M_n(k_1,\dots, k_n) \prod_{j=1}^n \hat{f}_j(k_j).
\end{equation*}
We will often write $\Lambda_n(M_n;f)$ instead of $\Lambda_n(M_n; f, \overline{f}, \dots, f, \overline{f})$. Using this notation, the first modified energy becomes
\begin{equation*}
    E^1(u)=\frac{1}{2} \Lambda_2(m(k_1)\abs{k_1}^\alpha m(k_2) \abs{k_2}^\alpha;u)+ \frac{1}{4}\Lambda_4(m(k_1)m(k_2)m(k_3)m(k_4);u).
\end{equation*}
 We can now define the second modified energy
\begin{equation*}
    E^2(u):= \frac{1}{2} \Lambda_2(m(k_1)\abs{k_1}^\alpha m(k_2) \abs{k_2}^\alpha;u)+ \frac{1}{4}\Lambda_4(M_4(k_1,k_2,k_3,k_4);u),
\end{equation*}
where
\begin{equation}
\label{def: M4}
    % M_4(k_1,k_2,k_3,k_4):= \begin{cases}
    %     \frac{\sum_{j=1}^4 (-1)^{j} \abs{k_j}^{2\alpha}m(k_j)^2}{\abs{k_1}^{2\alpha}-\abs{k_2}^{2\alpha}+\abs{k_3}^{2\alpha}-\abs{k_4}^{2\alpha}}, &\text{  for  } (k_1,k_2,k_3,k_4) \notin \Gamma_{\text{res}}, \\
    %     m(k_1)m(k_2)m(k_3)m(k_4), &\text{ otherwise,}
    M_4(k_1,k_2,k_3,k_4) := 
\begin{cases}
  \dfrac{\sum_{j=1}^4 (-1)^{j} \abs{k_j}^{2\alpha} m(k_j)^2}
        {\abs{k_1}^{2\alpha}-\abs{k_2}^{2\alpha}+\abs{k_3}^{2\alpha}-\abs{k_4}^{2\alpha}},
    & (k_1,k_2,k_3,k_4) \notin \Gamma_{\mathrm{res}}, \\[1ex]
  m(k_1)m(k_2)m(k_3)m(k_4),
    & (k_1,k_2,k_3,k_4) \in \Gamma_{\mathrm{res}},
\end{cases}
\end{equation}
where $\Gamma_{\text{res}}$ denotes the set of resonant frequencies,
\begin{align*}
    \Gamma_{\text{res}}&:=\{(k_1,k_2,k_3,k_4) \in \Gamma_4: \abs{k_1}^{2\alpha}-\abs{k_2}^{2\alpha}+\abs{k_3}^{2\alpha}-\abs{k_4}^{2\alpha}=0 \}, \\
    &\phantom{:}=\{(k_1,k_2,k_3,k_4) \in \Gamma_4: (k_1+k_2)(k_2+k_3)=0 \}.
\end{align*}
If $u: \mathbb{T}_\lambda \times \R \rightarrow \mathbb{C}$ is a smooth solution to (\ref{eq:fractional_main_lwp}), then
\begin{equation*}
    \partial_t E^2(u(t))= i\Lambda_6(M_6;u(t)),
\end{equation*}
where,
\begin{align}
M_6(k_1,\dots,k_6):=M_4(k_1+k_2+k_3,k_4,k_5,k_6)-M_4(k_1,k_2+k_3+k_4,k_5,k_6) \nonumber \\
    +M_4(k_1,k_2,k_3+k_4+k_5,k_6)-M_4(k_1,k_2,k_3,k_4+k_5+k_6) .
    \label{eq: M6}
\end{align}
We note that, by construction, bounding the growth of $E^2$ is expected to be more straightforward than that of $E^1$ since $M_4$ is defined in such a way that it cancels the effect of $\Lambda_2(m(k_1)\abs{k_1}^\alpha m(k_2) \abs{k_2}^\alpha;u(t))$ when taking the time derivative. This is only possible because the resonant set is trivial and coincides with the resonant set of the NLS ($\alpha=1$) equation which is integrable. Indeed, when the denominator of $M_4$ vanishes, the numerator also vanishes, which allows us to establish a bound for $M_4$ (see Lemma \ref{lemma: M_4_bounded}). To this end, we first recall a lemma from \cite{demirbas_fractional} which characterizes the resonant set.
\begin{lemma}\cite[Lemma 2]{demirbas_fractional}
\label{lemma: convexity_demirbas}
    There exists a constant $C=C(\alpha)>0$ such that
    \begin{equation*}
    \left\vert\abs{k_1}^{2\alpha}-\abs{k_2}^{2\alpha}+\abs{k_3}^{2\alpha}-\abs{k_4}^{2\alpha}\right\rvert \geq C\frac{\abs{k_1+k_2}\abs{k_2+k_3}}{\left(\abs{k_1}+\abs{k_2}+\abs{k_3}+\abs{k_4}\right)^{2-2\alpha}},
\end{equation*}
for all $(k_1,k_2,k_3,k_4) \in \Gamma_4$.
\end{lemma}
As a consequence, we obtain the following bound for $M_4$.
\begin{lemma}
\label{lemma: M_4_bounded}
There exists a constant $C=C(\alpha,s)>0$ such that
\begin{equation*}
    \abs{M_4(k_1,k_2,k_3,k_4)} \leq Cm(k_3^\star)^2
\end{equation*}
for all $(k_1,k_2,k_3,k_4) \in \Gamma_4$, where \( (k_1^\star, k_2^\star, k_3^\star, k_4^\star) \) denotes the ordering of the tuple \( (k_1, k_2, k_3, k_4) \) such that \( \abs{k_1^\star} \geq \abs{k_2^\star} \geq \abs{k_3^\star} \geq \abs{k_4^\star} \).
\begin{proof}
    For convenience, define $f(t):= \abs{t}^{2\alpha}m(t)^2$ and $g(t):=\abs{t}^{2\alpha}$. By homogeneity, it suffices to consider $N=1$. If $\abs{k_1^\star}\leq 1$, then the estimate follows immediately, so we restrict our attention to $\abs{k_1^\star}>1$.  By symmetry, we may assume that  $k_1\geq k_3\geq 0\geq k_4\geq k_2$ and without loss in generality, we set $k_1=k_1^\star$.\\
    First, consider the case $k_3 \geq k_3^\star$. It follows immediately that $(k_1,k_2,k_3,k_4) \in \Gamma_{\text{res}}$ and so, by construction, $M_4$ satisfies the desired bound. Therefore, we only need to consider the case $k_3=k_4^\star$ or equivalently $k_1 \geq -k_2 \geq -k_4 \geq k_3 \geq 0$. Since $(k_1,k_2,k_3,k_4) \in \Gamma_4$, we have $k_1 \sim \abs{k_2}$.  We now consider three subcases:
    \begin{enumerate}
        \item[(i)] $\abs{k_2+k_3} \gtrsim k_1$ and $k_3 \sim \abs{k_4}$, 
        \item[(ii)] $\abs{k_2+k_3} \gtrsim k_1$ and $k_3 \ll \abs{k_4}$,
        \item[(iii)] $\abs{k_2+k_3} \ll k_1$.
    \end{enumerate}
For (i) and (ii), the denominator of $M_4$ is lower bounded using Lemma \ref{lemma: convexity_demirbas},
\begin{equation*}
    \abs{g(k_1)-g(k_2)+g(k_3)-g(k_4)} \gtrsim (k_1+k_2)k_1^{2\alpha-1}.
\end{equation*}
For (i), we use the triangle inequality followed by the mean value theorem to bound the numerator of $M_4$,
\begin{align*}
            \abs{f(k_1)-f(k_2)+f(k_3)-f(k_4)} &\leq \abs{f(k_1)-f(k_2)}+ \abs{f(k_3)-f(k_4)}, \\
            &\lesssim (k_1+k_2) m(k_4)^2 k_1^{2\alpha-1}.
\end{align*}
For (ii), we have that $k_3 \ll \abs{k_4}$ and so $k_1+k_2 = \abs{k_3+k_4} \gtrsim \abs{k_4}$. Therefore,
\begin{align*}
            \abs{f(k_1)-f(k_2)+f(k_3)-f(k_4)} &\leq \abs{f(k_1)-f(k_2)} +2f(k_4), \\
            &\lesssim (k_1+k_2) m(k_4)^2 k_1^{2\alpha-1}
\end{align*}
For (iii), note that $k_1 \sim k_3 \sim \abs{k_2} \sim \abs{k_4}$. 
The denominator of $M_4$ is bounded using  Lemma \ref{lemma: convexity_demirbas},
\begin{equation*}
    \abs{g(k_1)-g(k_2)-g(k_3)-g(k_4)} \gtrsim (k_1+k_2)\abs{k_2+k_3}k_1^{2\alpha-2},
\end{equation*}
whereas the numerator of $M_4$ can be bounded by the double-mean value theorem,
\begin{align*}
            \abs{f(k_1)-f(k_2)+f(k_3)-f(k_4)} &\leq \abs{\int_{-k_2}^{k_1} \int_{k_3+k_2}^0 f''(\eta+\xi) d\eta d\xi },\\
            &\lesssim (k_1+k_2) \abs{k_2+k_3} m(k_3)^2 k_1^{2\alpha-2}. \qedhere
        \end{align*}
\end{proof}
\end{lemma}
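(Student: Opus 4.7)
My plan is to view $M_4$ as a divided difference and bound it using the mean value theorem together with Lemma~\ref{lemma: convexity_demirbas}. Writing $f(t) := \abs{t}^{2\alpha} m(t)^2$ and $g(t) := \abs{t}^{2\alpha}$, the denominator of~(\ref{def: M4}) is $g(k_1)-g(k_2)+g(k_3)-g(k_4)$ and the numerator is the analogous expression with $f$ in place of $g$. Since $f = g \cdot m^2$ and $m$ is smooth and slowly varying, one expects the ratio to behave like $m^2$ evaluated at an appropriate frequency, namely the third largest.

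First I would reduce to $N=1$ by homogeneity and dispose of $\abs{k_1^\star}\leq 1$ trivially since there all $m_j \sim 1$. Next, using the symmetries of $M_4$ under joint sign flips and swaps, I would order $k_1 = k_1^\star \geq k_3 \geq 0 \geq k_4 \geq k_2$. The zero-sum constraint $k_1+k_3 = \abs{k_2}+\abs{k_4}$ combined with $k_1\geq\abs{k_2}$ forces $\abs{k_4}\geq k_3$ automatically, which pins down the identification $k_3^\star \in \{\abs{k_2},\abs{k_4}\}$.

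I would then split based on whether the quadruple is resonant. The subcase $k_3 = \abs{k_4}$ forces $k_1 = \abs{k_2}$, making the tuple lie in $\Gamma_{\text{res}}$; then $M_4 = m_1 m_2 m_3 m_4$ by definition and the bound follows since $m \leq 1$. In the non-resonant subcase, Lemma~\ref{lemma: convexity_demirbas} yields a lower bound on the denominator of the form $\abs{k_1+k_2}\abs{k_2+k_3}\, k_1^{2\alpha-2}$. I would split further according to the relative sizes of $\abs{k_2+k_3}$ and $k_1$, and whether $k_3 \sim \abs{k_4}$ or $k_3 \ll \abs{k_4}$. When $\abs{k_2+k_3}\gtrsim k_1$, I apply the ordinary MVT separately to $f(k_1)-f(k_2)$ and $f(k_3)-f(k_4)$ (or directly bound the latter by $f(k_4)$ when $k_3 \ll \abs{k_4}$), producing an upper bound that matches the denominator up to a factor of $m(k_3^\star)^2$.

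The main difficulty will be the remaining near-resonant regime $\abs{k_2+k_3}\ll k_1$, where $k_1 \sim \abs{k_2} \sim k_3 \sim \abs{k_4}$ and a single MVT on each pair loses a factor of $\abs{k_2+k_3}/k_1$. To recover this I would use a double mean value theorem, writing $f(k_1)-f(k_2)+f(k_3)-f(k_4)$ as a double integral of $f''(\eta+\xi)$ over a rectangle of dimensions $(k_1+k_2)\times(k_2+k_3)$. The smoothness assumption $\abs{t^2 m''(t)}\lesssim 1$ is crucial here: it gives $\abs{f''(t)}\lesssim \abs{t}^{2\alpha-2} m(t)^2$, so the resulting upper bound on the numerator cancels the denominator exactly, leaving $m(k_3)^2 \sim m(k_3^\star)^2$ as desired.
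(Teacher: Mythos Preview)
Your proposal is correct and follows essentially the same approach as the paper's proof: the same auxiliary functions $f,g$, the same homogeneity and symmetry reductions, the same identification of the resonant case, the same three-way split on $\abs{k_2+k_3}$ versus $k_1$ and $k_3$ versus $\abs{k_4}$, and the same tools (Lemma~\ref{lemma: convexity_demirbas} for the denominator, single MVT in the separated regime, double MVT in the near-resonant regime). Your explicit identification of the bound $\abs{f''(t)}\lesssim \abs{t}^{2\alpha-2}m(t)^2$ as the place where the hypothesis $\abs{t^2 g_1''(t)}\lesssim 1$ enters is exactly what drives the paper's case~(iii).
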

 We now turn to the proof of Theorem  \ref{theorem:main_gwp_theorem_n=1}. Rather than working directly with the original \eqref{eq:fractional_main_lwp} on $\T$, we prefer to work with solutions $Iu $ to \eqref{eq:fractional_I_system} (see \cref{sec:4.1}) on $\T_\lambda$ in order to exploit the scaling symmetry of the equation. Specifically, we continue solutions $Iu$ of the \eqref{eq:fractional_I_system} which yields a growth bound for solutions of the \eqref{eq:fractional_main_lwp} on the rescaled torus $\T_\lambda$. By inverting the rescaling, we obtaining polynomial in time bounds for the $H^s(\T)$ norm of solutions to the \eqref{eq:fractional_main_lwp} thereby establishing global well-posedness. Following \cite{mass_critical_gwp_staffilani_tzirakis}, we split the proof of Theorem \ref{theorem:main_gwp_theorem_n=1} in three steps: (i) establishing local in time bounds for the $H^\alpha$ norm of solutions to (\ref{eq:fractional_I_system}), (ii) demonstrating that $E^2$ controls the $H^\alpha$ norm of $Iu$ by showing it is a good approximation of $E^1$ and (iii) deriving a growth bound for $E^2$.
\subsection{Local (in time) bounds for solutions to (I-fNLS)}\label{sec:4.1}
The first step in the proof of Theorem \ref{theorem:main_gwp_theorem_n=1} is establishing a local in time bound for $\norm{Iu}_{X^{\alpha,b}_T}$ where $u$ is a solution to (fNLS). To do this we apply $I$-operator to (fNLS) and use a fixed point argument on the following initial value problem, 
\begin{equation}
    \label{eq:fractional_I_system}
    \begin{cases}
       i \partial_t Iu+\left(-\Delta\right)^{\alpha}Iu=-I\left(\left\lvert u \right \rvert^2 u\right), \\
       Iu(x,0)=Iu_0(x) \in H^{\alpha}.
    \end{cases}
    \tag{I-fNLS}
\end{equation}
In particular, we prove the following result.
\begin{lemma}
\label{lemma: lwp_ILNS}
%Suppose that $s>\frac{1-\alpha}{2}$ and $Iu_0: \T_\lambda \rightarrow \C$ satisfies $\norm{Iu_0}_{H^\alpha} \leq r \lambda^{\frac{1-2\alpha}{2}}$ for some constant $r>0$. Then (I-fNLS) is locally well-posedness. In particular, 
For any $s\geq \frac{1-\alpha}{2}$, $r>0$ and $0<\epsilon \ll 1$, there exist constants $C=C(s,\alpha,r,\epsilon)>0$  and $\lambda_0=\lambda_0(s,\alpha,r,\epsilon)>0$ such that for any $\lambda\geq \lambda_0$ and any initial data $u_0$ satisfying $\norm{Iu_0}_{H^\alpha} \leq r \lambda^{\frac{1-2\alpha}{2}}$, we have the following estimate for solutions to \eqref{eq:fractional_main_lwp}
\begin{equation*}
   \norm{Iu}_{X^{\alpha,\frac{1}{2}+\epsilon}_T\left([0,T]\right)} \leq C\lambda^{\frac{1-2\alpha}{2}},
\end{equation*}
where $T=\lambda^{2(2\alpha-1)-\epsilon}$.
\end{lemma}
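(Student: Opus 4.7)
The plan is to establish Lemma \ref{lemma: lwp_ILNS} by a standard contraction mapping argument for the Duhamel formulation of \eqref{eq:fractional_I_system} in a ball of $Y^\alpha_T$ of radius $C_0 r \lambda^{(1-2\alpha)/2}$, with $C_0$ an appropriate absolute constant. Since $I_N^{\alpha-s}$ is a Fourier multiplier that commutes with the linear propagator, the map
\begin{equation*}
\Phi(w)(t) := S_\lambda(t) Iu_0 \; + \; i\int_0^t S_\lambda(t-t')\, I\!\left(|I^{-1}w|^2\, I^{-1}w\right)(t')\,dt'
\end{equation*}
admits a fixed point $w=Iu$ iff $u$ solves \eqref{eq:fractional_I_system}. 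Applying Lemma \ref{lemma: Y^s properties} term by term, I would reduce the whole estimate to a \emph{trilinear bound} of the schematic form
\begin{equation*}
\left|\int_0^T\!\!\!\int_{\T_\lambda} I(u_1 \bar{u}_2 u_3)\, \bar{v}\; dx\, dt\right| \; \lesssim \; G(\lambda,T)\;\prod_{j=1}^3 \|Iu_j\|_{Y^\alpha_T}\;\|v\|_{Y^{-\alpha}_T},
\end{equation*}
where the gain factor $G(\lambda,T)$ must satisfy $G(\lambda,T)\,(r\lambda^{(1-2\alpha)/2})^2 \ll 1$ to close the contraction. For $T = \lambda^{2(2\alpha-1)-\epsilon}$, this forces $G \lesssim \lambda^{-\epsilon/2}$, so I need the nonlinear estimate to beat the scaling obstruction by a small positive power of $\lambda$.

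Next I would use the invariance lemma, Lemma \ref{lemma: I-method_invariant_lemma}, to peel off the $I=I_N^{\alpha-s}$ operator: it suffices to verify the trilinear estimate at the base level $N=1$, i.e.\ without any smoothing beyond what the $Y^\alpha$ norms already provide. After a Littlewood--Paley decomposition $u_j = \sum_{N_j} P_{N_j} u_j$, $v=\sum_{N_4} P_{N_4} v$, I would perform the standard case analysis driven by the constraint $k_1 - k_2 + k_3 - k_4 = 0$, which forces the largest of $\{N_1,N_2,N_3,N_4\}$ to be comparable to the second-largest. By symmetry and duality it is enough to treat two representative regimes:
\begin{itemize}
\item (high--low) $N_1 \sim N_2 \gg N_3, N_4$: pair the two high-frequency factors and apply the bilinear Strichartz estimate of Lemma \ref{lemma: Bilinear Y^0 Strichartz}, which yields the crucial gain $(T/\lambda + N_1^{1-2\alpha})^{1/2}$. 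The resulting Fourier weights on the remaining low frequencies are controlled by the corresponding $Y^\alpha$/$Y^{-\alpha}$ norms with positive power of $\lambda^{-\epsilon}$ to spare.
\item (balanced) $N_1 \sim N_2 \sim N_3 \sim N_4 \sim N$: apply H\"older with three copies of the $L^6_{x,t}$ Strichartz estimate from Lemma \ref{lemma:Linear Y^0 Strichartz} and $\|v\|_{L^2_{x,t}} \lesssim T^{1/2}\|v\|_{Y^0_T}$. The net power of $N$ becomes $N^{1-\alpha+\epsilon}\cdot N^{-2\alpha}$ after taking into account the weights $\langle k \rangle^{-\alpha}$ (from $Y^{-\alpha}$) and $\langle k\rangle^{\alpha}$ (from $Y^\alpha$), which is summable in $N$ precisely because $\alpha > 1/2$; the time factor is $(T/\lambda^{2\alpha})^{1/2}\cdot T^{1/2}$, which I would rearrange as $T/\lambda^{\alpha}$ and show absorbs into the desired $G(\lambda,T)\lesssim \lambda^{-\epsilon/2}$.
\end{itemize}

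The main obstacle is the balanced case together with the fact that we must work at the scaling-critical threshold $s=(1-\alpha)/2$. At this regularity no additional Sobolev weight is available to kill the $N^{(1-\alpha)/3+\epsilon}$ loss from each $L^6$ Strichartz; only the exact subcritical scaling relation (the fact that $s > s_c = 1/2-\alpha$) produces the small surplus that is converted, via the rescaling onto $\T_\lambda$, into the gap $\epsilon$ between $T=\lambda^{2(2\alpha-1)-\epsilon}$ and the natural scale $\lambda^{2(2\alpha-1)}$. The careful book-keeping of this surplus — ensuring the dyadic sums converge while leaving room to absorb the $\lambda^{0+}$ losses in Lemmas \ref{lemma:Linear Y^0 Strichartz} and \ref{lemma: Bilinear Y^0 Strichartz} — is where the main work lies. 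Once the trilinear estimate is closed, a standard Lipschitz-difference variant shows $\Phi$ is a contraction on the ball of radius $2 C_0 r\lambda^{(1-2\alpha)/2}$, and by choosing $\lambda_0(\epsilon,r)$ large so that $G(\lambda,T) r^2 \lambda^{1-2\alpha} \leq \tfrac{1}{2}$ for all $\lambda\geq \lambda_0$, the unique fixed point $w=Iu$ satisfies the claimed bound.
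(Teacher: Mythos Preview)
Your overall architecture --- contraction in $Y^\alpha_T$, duality via Lemma~\ref{lemma: Y^s properties}, reduction through the invariance Lemma~\ref{lemma: I-method_invariant_lemma}, and a Littlewood--Paley case analysis --- matches the paper exactly. The gap is in the balanced block. After the invariance lemma, the estimate one must prove is
\[
\left|\int_0^T\!\!\int_{\T_\lambda} u_1 u_2 u_3\,\bar v\;dx\,dt\right|\;\lesssim\;\lambda^{2\alpha-1-}\prod_{j=1}^3\|u_j\|_{Y^{s}}\,\|v\|_{Y^{-s}},
\]
because $\|I_1^{\alpha-s}u\|_{Y^\alpha}\sim\|u\|_{Y^s}$; the weights on the right are $Y^{\pm s}$, not $Y^{\pm\alpha}$. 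Consequently, in the diagonal block $L_1\sim L_2\sim L_3\sim M\sim L$ the available Sobolev gain is $L^{-2s}$, not the $L^{-2\alpha}$ you use. Your $L^6\times L^6\times L^6\times L^2$ scheme then produces the net power $L^{1-\alpha-2s+\epsilon}$, which at the endpoint $s=\frac{1-\alpha}{2}$ is $L^{\epsilon}$ and does not sum. The paper avoids this by using the \emph{sharp} $L^4_{x,t}$ estimate of Lemma~\ref{lemma:Linear Y^0 Strichartz} (which, unlike the $L^6$ estimate, carries no $\epsilon$-loss): pairing the four factors into two $L^2_{x,t}$ products and invoking the case $C_0(L_1,L_2)=L_1^{(1-\alpha)/2}$ for comparable frequencies gives exactly $L^{1-\alpha-2s}=L^0$, and the remaining dyadic sum closes by Cauchy--Schwarz since all four indices are tied together.

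A secondary issue: in your high--low regime $N_1\sim N_2\gg N_3,N_4$ you propose to ``pair the two high-frequency factors'' and apply Lemma~\ref{lemma: Bilinear Y^0 Strichartz}, but that lemma requires the two frequencies to be well separated. The correct move --- and what the paper does --- is to pair each high factor with a low one (e.g.\ $(u_1,v)$ and $(u_2,u_3)$, or $(u_1,u_3)$ and $(u_2,v)$), using the bilinear estimate on the genuinely separated pairs and the $L^4$ estimate on any pair whose frequencies happen to be comparable.
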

 We remark that the condition $\norm{Iu_0}_{H^\alpha} \leq r \lambda^{\frac{1-2\alpha}{2}}$ stems from the mass subcritical nature of the equation (see the proof of Theorem \ref{theorem:main_gwp_theorem_n=1}). By standard iterative methods and the properties of $X^{\alpha,b}_T$ spaces, Lemma \ref{lemma: lwp_ILNS} follows immediately from the following trilinear estimate.
\begin{lemma}
\label{lemma:trilinear_estimates_lwp}
For any $s \geq \frac{1-\alpha}{2}$ and $0<\epsilon \ll 1$ there exist a constant $C=C(s,\alpha,\epsilon)>0$ such that for any $0<T \leq \lambda^2$, we have the estimate
    \begin{equation*}
    \norm{\int_0^t S(t-t')I(u_1\overline{u}_2u_3) dt'}_{X^{\alpha,\frac{1}{2}+\epsilon}_T\left([0,T]\right)} \leq C T^{\frac{1}{2}} \prod_{j=1}^3 \norm{Iu_j}_{X^{\alpha,\frac{1}{2}+\epsilon}_T\left([0,T]\right)}.
\end{equation*}
\begin{proof}
First note that by Lemma \ref{lemma: I-method_invariant_lemma}, it suffices to show that
    \begin{equation*}
\norm{\int_0^t S(t-t')(u_1\overline{u}_2u_3) dt'}_{X^{s,\frac{1}{2}+}_T\left([0,T]\right)} \lesssim T^{\frac{1}{2}} \prod_{j=1}^3 \norm{u_j}_{X^{s,\frac{1}{2}+}_T\left([0,T]\right)}.
\end{equation*}
By Lemma \ref{lemma: X(s,b) properties}(b), we have that
        \begin{equation*}
            \norm{\int_0^t S(t-t') u_1\overline{u}_2u_3  \,\,dt'}_{X^{s,\frac{1}{2}+}_T\left([0,T]\right)} \lesssim T \norm{u_1u_2u_3}_{X^{s,-\frac{1}{2}+}_T\left([0,T]\right)},
        \end{equation*}
so, by duality, it suffices to show
\begin{equation*}
    \abs{\int_0^T \int_{\T_\lambda} u_1 \overline{u}_2 u_3 \overline{v} dx dt} \lesssim T^{\frac{1}{2}} \norm{v}_{X^{-s,\frac{1}{2}-}_T} \prod_{j=1}^3 \norm{u_j}_{X^{s,\frac{1}{2}+}_T}.
\end{equation*}
We dyadically decompose $v=\sum_{M \in 2^{\N_0}} P_{M}v$ and for $j=1,2,3$, $u_j=\sum_{N_j \in 2^{\N_0}} P_{N_j}u_j$. Note that by orthogonality, we have that $M \lesssim N_1+N_2+N_3$. Therefore, we need to bound
\begin{equation*}
    \sum_{M,N} I(M,N) := \sum_{M,N} \int_0^T\int_{\T_\lambda} \abs{P_{N_1}u_1P_{N_2}u_2P_{N_3}u_3P_{M}v},
\end{equation*}
where $N:=(N_1,N_2,N_3)$ and $M \lesssim \max\left(N_1,N_2,N_3\right)$. Without loss in generality we can assume that $N_1 \geq N_2 \geq N_3$ and so and we can split the summation into two regions: (i) $N_1 \geq N_2 \geq N_3$ with $N_1 \sim M$ and (ii) $N_1 \sim N_2 \geq N_3$ with $N_1 \gg M$. \\
We begin by recalling that if we combine estimates \eqref{eq: bilinear_strichartz_impoved in b} and \eqref{eq: high_low bilinear_strichartz_impoved_b_improvement} from Lemma \ref{lemma: Improved_in_b_Bilinear X^0 Strichartz}, we have
\begin{equation*}
    \norm{P_{L_1}f_1 P_{L_2}f_2}_{L^2_{t,x}\left([0,T]\times \T_\lambda\right)} \lesssim T^{\frac{1}{4}} C_0(L_1,L_2) \norm{P_{L_1}f_1}_{X^{0,\frac{1}{2}+}_T} \norm{P_{L_2}f_2}_{X^{0,\frac{1}{2}-}_T},
\end{equation*}
where $L_1, L_2 \in 2^{\N_0}$ and
\begin{equation*}
    C_0(L_1,L_2):= \begin{cases}
       L_1^{\frac{1-\alpha}{2}} &\text{ if } L_1 \sim L_2 \\
        \min \left(L_1,L_2\right)^{0+} &\text{ otherwise}
    \end{cases}
\end{equation*}
Therefore, for case (i), we have the following estimate
\begin{equation*}
    \sum_{\substack{N\\ N_1 \sim M }} I(M,N) \lesssim T^\frac{1}{2} \sum_{\substack{N \\ N_1 \sim M }}\frac{C_0(N_1,N_2) C_0(M,N_3)}{\left(N_1 N_2 N_3 \right)^sM^{-s}}   \norm{P_M v}_{X^{-s,\frac{1}{2}-}_T} \prod_{j=1}^3 \norm{P_{N_j}u_j}_{X^{s,\frac{1}{2}+}_T}.
\end{equation*}
We further break down the summation into three regions: (a) $N_1 \gg N_2$, (b) $N_1 \sim N_2 \gg N_3$, (c) $N_1 \sim N_2 \sim N_3$. For case (a), since $s\geq \frac{1-\alpha}{2}>0$, there exists a $\gamma>0$ such that 
\begin{equation}
\label{eq: lwp_multiplier}
   \frac{C_0(N_1,N_2) C_0(M,N_3)}{\left(N_1 N_2 N_3 \right)^sM^{-s}}  \lesssim \frac{1}{\left(N_2N_3\right)^\gamma}.
\end{equation}
Thus, by Cauchy–Schwarz
\begin{align*}
    \sum_{\substack{N\\ N_1 \sim M }} I(M,N)   &\lesssim T^{\frac{1}{2}} \prod_{j=2}^3\norm{u_j}_{X^{s,\frac{1}{2}+}_T} \sum_{N_1 \sim M}  \norm{P_{N_1}u_1}_{X^{s,\frac{1}{2}+}_T} \norm{P_Mv}_{X^{-s,\frac{1}{2}-}_T}  , \\
    &\lesssim  T^{\frac{1}{2}} \norm{v}_{X^{-s,\frac{1}{2}-}_T }\prod_{j=1}^3 \norm{u_j}_{X^{s,\frac{1}{2}+}_T}.
\end{align*}
The summation in regions (b) and (c) can be handled in a similar fashion. In particular, the multiplier on the left hand side of \eqref{eq: lwp_multiplier} is bounded $N_3^{-\gamma}$ for some $\gamma>0$ in case (b) and by an absolute constant in case (c). \\
Now for case (ii): $N_1 \sim N_2 \gg M$. Here we further split the summation into two regions (a) $N_1 \sim N_2 \gg M$ with $N_2 \gg N_3$ and (b) $N_1 \sim N_2 \sim N_3 \gg M$. We begin with case (a),
\begin{align*}
    \sum_{\substack{N \\ N_1 \sim N_2 \\ N_1\gg M }} I(M,N) &\lesssim T^{\frac{1}{2}} \sum_{\substack{N\\ N_1 \sim N_2 \\ N_1 \gg M }} \frac{C_0(N_1,M)C_0(N_2,N_3)}{\left(N_1 N_2 N_3 \right)^sM^{-s}}  \norm{P_M v}_{X^{-s,\frac{1}{2}-}_T} \prod_{j=1}^3 \norm{P_{N_j}u_j}_{X^{s,\frac{1}{2}+}_T}, \\
    &\lesssim T^{\frac{1}{2}} \sum_{\substack{N\\ N_1 \sim N_2 \\ N_1 \gg M }} \frac{1}{N_3^\gamma} \frac{M^s}{N_1^s} \norm{P_M v}_{X^{-s,\frac{1}{2}-}_T} \prod_{j=1}^3 \norm{P_{N_j}u_j}_{X^{s,\frac{1}{2}+}_T} , \\
     &\lesssim T^{\frac{1}{2}} \norm{u_3}_{X^{s,\frac{1}{2}+}_T} \sum_{\substack{N_1 \sim N_2 \\ N_1 \gg M }} \frac{M^s}{N_1^s}  \norm{P_M v}_{X^{-s,\frac{1}{2}-}_T} \prod_{j=1}^2 \norm{P_{N_j}u_j}_{{X^{s,\frac{1}{2}+}_T}}, \\
    &\lesssim T^{\frac{1}{2}} \prod_{j=2}^3 \norm{u_j}_{{X^{s,\frac{1}{2}+}_T}} \sum_{\substack{ N_1 \gg M }} \frac{M^s}{N_1^s} \norm{P_{N_1}u_1}_{{X^{s,\frac{1}{2}+}_T}} \norm{P_M v}_{X^{-s,\frac{1}{2}-}_T}, \\
    &\lesssim T^{\frac{1}{2}} \norm{v}_{X^{-s,\frac{1}{2}-}_T} \prod_{j=1}^3 \norm{u_j}_{X^{s,\frac{1}{2}+}_T},
\end{align*}
where in the second estimate we used the fact that $s\geq \frac{1-\alpha}{2}>0$. We conclude by noting that case (b) can be handled in a analogous manner.
\end{proof}
\end{lemma}
\subsection{$E^2$ is a small perturbation of $E^1$}
It is clear how $E^1$ can bound the $H^\alpha$ norm of $Iu$, however since we will be controlling the growth of $E^2$ (which is not sign definite), we show that $E^2$ is well approximated by $E^1$. 
\begin{lemma}
    \label{lemma: E_2_vs_E_1}
There exists a constant $C=C(\alpha,s)>0$ such that the following estimate holds for any $u \in H^s(\T_\lambda)$,
\begin{equation*}
    \abs{E^2(u)-E^1(u)} \leq C N^{-\alpha} \norm{Iu}_{H^\alpha}^4.
\end{equation*}
\begin{proof}
We dyadically decompose $u = \sum_{j\geq 1} u_j$ where $u_j:=P_{N_j} u$ for dyadic $N_j \in 2^{\N_0}$ and without loss in generality assume that $\hat{u}$ is real and non-negative. By the definition of $E^1$ and $E^2$, it suffices to prove that
\begin{equation*}
   \sum_{N_1,N_2 N_3 , N_4} \abs{\int_{\Gamma_4} \left(M_4- \prod_{j=1}^4 m(k_j)\right)\prod_{j=1}^4\hat{u}_j(k_j)} \lesssim N^{-\alpha} \norm{Iu}_{H^\alpha}^4.
\end{equation*}
We define $N_1^\star \geq N_2^\star \geq N_3^\star \geq N_4^\star$ to be the decreasing ordering of $(N_1, N_2,N_3, N_4)$ and set $u_j^\star:=P_{N_j^\star} u$. Observe that the multiplier $M_4-\prod_{j=1}^4 m(k_j)$ vanishes unless $N_1^\star \gtrsim N$. On $\Gamma_4$, this implies that the two largest frequencies satisfy $N_1^\star \sim N_2^\star  \gtrsim N$. Therefore, using Lemma \ref{lemma: M_4_bounded} to bound $M_4-\prod_{j=1}^4 m(k_j)$, it suffices to show
\begin{equation*}
    m(N_3^\star)^2\int_{\Gamma_4} \prod_{j=1}^4\hat{u}_j(k_j) \lesssim N^{-\alpha} \left(N_1^\star \right)^{0-}\prod_{j=1}^4 \norm{Iu_j}_{H^{\alpha}}.
    %\label{eq: lemma_E2_E1}
\end{equation*}
This follows immediately from Sobolev embedding 
\begin{align*}
     m(N_3^\star)^2\int_{\Gamma_4} \prod_{j=1}^4\hat{u}_j(k_j) &\lesssim \frac{1}{m(N_1^\star)m(N_2^\star)} \int_{\Gamma_4} \prod_{j=1}^4 m(N_j^\star)\hat{u}_j(k_j), \\
     &\lesssim  \frac{1}{(N_1^\star)^\alpha m(N_1^\star)(N_2^\star)^\alpha m(N_2^\star)} \prod_{j=1}^2 \norm{Iu_j^\star}_{H^\alpha}\prod_{j=3}^4 \norm{Iu_j^\star}_{L^\infty},  \\
    &\lesssim \left(N_1^\star\right)^{0-}N^{-2\alpha+}\prod_{j=1}^2 \norm{Iu_j^\star}_{H^\alpha}\prod_{j=3}^4 \norm{Iu_j^\star}_{L^\infty}, \\
    &\lesssim  \left(N_1^\star\right)^{0-}N^{-2\alpha+} \left(N_3^\star\right)^{\frac{1}{2}-\alpha}\left(N_4^\star\right)^{\frac{1}{2}-\alpha} \prod_{j=1}^4 \norm{Iu_j}_{H^\alpha}. \qedhere
\end{align*}
\end{proof}
\end{lemma}
\subsection{Growth of $E^2$}
We estimate the growth of $E^2$ by using the long-time linear and bilinear Strichartz estimates from Lemma \ref{lemma:Linear X^0 Strichartz} and Lemma \ref{lemma: Bilinear X^0 Strichartz} respectively. We emphasize that the bilinear estimates are only required in the regime of small $\alpha$.
\begin{lemma}
    \label{lemma: growth E2}
Suppose that $s>\frac{1-\alpha}{3}$ and $u:\T_\lambda\times \R \rightarrow \mathbb{C}$ is a smooth solution to \eqref{eq:fractional_main_lwp}. Then, for any $t_0 \in \R $ and $0<\epsilon \ll 1$, there exists a constant $C=C(\alpha,s,\epsilon)>0$ such that
\begin{equation*}
    E^2(u(t_0+T)) -E^2(u(t_0)) \leq C \lambda^\epsilon N^{-\gamma(4\alpha-1)+\epsilon}\norm{Iu}_{X^{\alpha,\frac{1}{2}+\epsilon}_T\left([t_0,t_0+T]\right)}^6,
\end{equation*}
where $T=\lambda^{2(2\alpha-1)-\epsilon}$ and 
\begin{equation*}
    \gamma:=\begin{cases}
        \frac{2}{3} &\text{ if } \lambda^{4\alpha-3} \geq N^{-(2\alpha-1)} , \\
       1 &\text{ if } \lambda^{4\alpha-3} < N^{-(2\alpha-1)}.
    \end{cases}
\end{equation*}
\begin{proof}
It suffices to consider $t_0=0$. By the fundamental theorem of calculus,
\begin{equation*}
    E^2(u(T)) -E^2(u(0)) = i\int_{0}^{T} \Lambda_6(M_6; u(t)) \,\, dt,
\end{equation*}
where $M_6$ is given in (\ref{eq: M6}). We use the same notation as in Lemma \ref{lemma: E_2_vs_E_1}: we dyadically decompose $u = \sum_{j\geq 1} u_j$ where $u_j:=P_{N_j} u$ for dyadic $N_j \in 2^{\N_0}$ and without loss in generality assume that $\hat{u}$ is real and non-negative. Moreover, let $N_1^\star \geq \cdots \geq N_6^\star$ denote the decreasing ordering of  $(N_1, \dots, N_6)$ and define $u_j^\star:=P_{N_j^\star}u$. It suffices to show
\begin{equation*}
\sum_{N_1, \dots, N_6} \abs{\int_{0}^{T} \int_{\Gamma_6} M_6 \prod_{j=1}^6 \hat{u}_j(k_j) }\lesssim \lambda^{0+} N^{-\gamma(4\alpha-1)+}\prod_{j=1}^6\norm{Iu_j}_{X^{\alpha,\frac{1}{2}+}_T\left([0,T]\right)}.
%    \label{eq:growth_bound_main_inequality}
\end{equation*}
Observe that $M_6$ vanishes unless $N_1^\star \gtrsim N$, so we restrict the above summation to $N_1^\star \sim N_2^\star \gtrsim N$. Moreover, $M_6$ is uniformly bounded on $\Gamma_6$ since $M_4$ is uniformly bounded on $\Gamma_4$ by Lemma \ref{lemma: M_4_bounded}. Thus, it suffices to show
\begin{equation*}
\int_{0}^{T} \int_{\Gamma_6} \prod_{j=1}^6 \hat{u}_j(k_j) \lesssim \left(N_1^\star \right)^{0-} \lambda^{0+} N^{-\gamma(4\alpha-1)+}\prod_{j=1}^6\norm{Iu_j}_{X^{\alpha,\frac{1}{2}+}_T\left([0,T]\right)}.
%    \label{eq:growth_bound_main_inequality}
\end{equation*}
To simplify notation, we omit the time interval $[0,T]$ when referring to the time restricted modified Bourgain spaces. Accordingly, in what follows $X^{s,b}_T$ should be understood as $X^{s,b}_T\left([0,T]\right)$. \\
We will also use the following monotonicity property throughout the proof:
\begin{equation*}
    \frac{1}{m(k)\abs{k}^\beta} \lesssim N^{-\beta}, \text{ } \forall \beta>0, \text{ } \abs{k} \gtrsim N \text{ and } s \geq \alpha-\beta.
\end{equation*}
We now split the proof into two cases: (a) $ \lambda^{4\alpha-3} \geq N^{-(2\alpha-1)}$ and (b) $ \lambda^{4\alpha-3} < N^{-(2\alpha-1)}$. 
\begin{enumerate}
    \item[\textbf{Case (a)}] Here we have $ \lambda^{4\alpha-3} \geq N^{-(2\alpha-1)}$ and note that from the monotonicity property, $\frac{1}{m(N_j^\star)(N_j^\star)^{\beta-}} \lesssim \frac{1}{N^{\beta-}}$ for $j=1,2$. Therefore,
\begin{align*}
   \int_{0}^{T} \int_{\Gamma_6} \prod_{j=1}^6 \hat{u}_j^\star (k_j) & \lesssim  \frac{\left(N_1^\star\right)^{0-}}{N^{2\beta-}}\int_{0}^{T} \int_{\Gamma_6} \prod_{j=1}^2 \left(N_j^\star\right)^\beta m(N_j^\star)\hat{u}_j^\star \prod_{j=3}^6 \hat{u}_j^\star, \\
     &\lesssim \frac{\lambda^{0+} \left(N_1^\star\right)^{0-}}{N^{2\beta-}} \prod_{j=1}^2\norm{Iu_j^\star}_{X^{{\beta+\frac{1-\alpha}{3}+,\frac{1}{2}+}}_T} \prod_{j=3}^6\norm{u_j^\star}_{X^{\frac{1-\alpha}{3}+,\frac{1}{2}+}_T}, \\
    &\lesssim \frac{\lambda^{0+} \left(N_1^\star\right)^{0-}}{N^{\frac{2}{3}\left(4\alpha-1\right)-}} \norm{Iu}_{X_T^{\alpha,\frac{1}{2}+}}^6,
\end{align*}
where in the second line we used Plancherel's theorem, followed by Hölder's inequality and Lemma \ref{lemma:Linear X^0 Strichartz} in the third line. In the last line we select $\beta=\frac{4\alpha-1}{3}-$ and impose the condition $s>\frac{1-\alpha}{3}$. 
\item[\textbf{Case (b)}] Here we have $ \lambda^{4\alpha-3} < N^{-(2\alpha-1)}$ and we further distinguish two subcases: (i) $ N_3^\star \gtrsim N$ and (ii) $ N_3^\star \ll N$.\\
For subcase (i), we have $N_1^\star, N_2^\star , N_3^\star \gtrsim N$ and so, by the monotonicity property, we deduce that $\frac{1}{m(N_j^\star)(N_j^\star)^{\beta-}} \lesssim \frac{1}{N^{\beta-}}$ for $j=1,2,3$. An identical argument to case (a) yields 
\begin{equation*}
   \int_{0}^{T} \int_{\Gamma_6} \prod_{j=1}^6 \hat{u}_j^\star (k_j) \lesssim \frac{\lambda^{0+} \left(N_1^\star\right)^{0-}}{N^{\left(4\alpha-1\right)-}} \norm{Iu}_{X_T^{\alpha,\frac{1}{2}+}}^6.
\end{equation*}
For subcase (ii), note that $N_1^\star \sim N_2^\star \gg N_3^\star$, so we may apply the refined bilinear estimate from Lemma \ref{lemma: Bilinear X^0 Strichartz}. Moreover, since $N_3^\star \ll N$, we have that $m(N_3^\star)=m(N_4^\star)=m(N_5^\star)=m(N_5^\star)=1$. Therefore, 
\begin{align*}
\int_{0}^{T} \int_{\Gamma_6} \prod_{j=1}^6 \hat{u}_j^\star (k_j) & \lesssim  \frac{\left(N_1^\star\right)^{0-}}{N^{2\alpha-}}\int_{0}^{T} \int_{\Gamma_6} \prod_{j=1}^2 \left(N_j^\star\right)^\alpha m(N_j^\star)\hat{u}_j^\star \prod_{j=3}^6 m(N_j^\star)\hat{u}_j^\star\\
&\lesssim \frac{ \left(N_1^\star\right)^{0-}}{N^{2\alpha-}} \norm{Iu_1^\star Iu_3^\star}_{L_{t,x}^2} \norm{Iu_2^\star Iu_4^\star}_{L_{t,x}^2} \prod_{j=5}^6 \norm{Iu_j^\star}_{L^\infty_{t,x}} \\
     &\lesssim \frac{\left(N_1^\star\right)^{0-}}{N^{4\alpha-1-}} \prod_{j=1}^4\norm{Iu_j^\star}_{X_T^{\alpha,\frac{1}{2}+}} \prod_{j=5}^6\norm{Iu_j^\star}_{L^\infty_{t,x}} \\
    &\lesssim \frac{\left(N_1^\star\right)^{0-}}{N^{4\alpha-1-}} \left(N_5^\star N_6^\star \right)^{\frac{1}{2}-\alpha}\norm{Iu}_{X_T^{\alpha,\frac{1}{2}+}}^6,
\end{align*}
where the second line follows from Hölder's inequality, the third line follows from Lemma \ref{lemma: Bilinear X^0 Strichartz} and the last line follows from the Fourier support of $u_5^\star$ and $u_6^\star$. \qedhere
\end{enumerate}
\end{proof}
\end{lemma}
 We can now prove Theorem \ref{theorem:main_gwp_theorem_n=1} using Lemmata \ref{lemma: lwp_ILNS}, \ref{lemma: E_2_vs_E_1} and \ref{lemma: growth E2}. 
\begin{proof}[Proof of Theorem  \ref{theorem:main_gwp_theorem_n=1}] 
Suppose that $s\geq \frac{1-\alpha}{2}$ and let $u_0 \in H^s(\T)$. Let $u^\lambda$ denote the solution to (\ref{eq:fractional_main_lwp}) with initial condition $u^\lambda(x,0)=u^\lambda_0(x):= \lambda^{-\alpha} u_0(\frac{x}{\lambda})$. We stress that $u^\lambda_0 : \T_\lambda \rightarrow 
\C$ whereas $u_0: \T \rightarrow \C$ and hence all $L^p$ norms ($p \geq 1$) in the remainder of the proof are defined on the appropriate domain ($\T_\lambda$ for $u^\lambda_0$ and $\T$ for $u_0$).
For every $u_0 \in H^s$ and every $N \gg 1$, we select $\lambda = N^{\frac{\alpha-s}{s}} \norm{u_0}_{H^s}^{\frac{1}{s}}$ such that $E^1(u_0^\lambda) \lesssim_{\norm{u_0}_2} \lambda^{1-2\alpha}$ so that Lemma \ref{lemma: lwp_ILNS} can be applied. In particular note that
\begin{equation}
    \norm{(-\Delta)^\frac{\alpha}{2} Iu_0^\lambda}_{2}^2=\norm{m(k)\abs{k}^\alpha\hat{u}_0^\lambda}_{L^2((dk)_\lambda)}^2 \lesssim \frac{N^{2\alpha-2s}}{\lambda^{2\alpha+2s-1}} \norm{u_0}_{\dot{H}^s}^2 \sim \lambda^{1-2\alpha},
    \label{eq: energy_derivative_bound}
\end{equation}
and by the Gagliardo-Nirenberg inequality\cite{Nirenberg}, 
\begin{align}
    \norm{Iu_0^\lambda}_{4}^4 &\lesssim \norm{(-\Delta)^\frac{\alpha}{2}Iu_0^\lambda}_{2}^{\frac{1}{\alpha}} \norm{Iu_0^\lambda}_{2}^{4-\frac{1}{\alpha}} + \norm{Iu_0^\lambda}_{2}^{4} \notag \\ 
    &\lesssim \lambda^{2(1-2\alpha)}\left(\frac{N^{2\alpha-2s}}{\lambda^{2s}}\right)^\frac{1}{2\alpha} \norm{u_0}_{\dot{H}^s}^{\frac{1}{\alpha}}\norm{u_0}_{2}^{4-\frac{1}{\alpha}}+\lambda^{2(1-2\alpha)}\norm{u_0}_{2}^{4},  \notag\\
      &\lesssim \lambda^{2(1-2\alpha)} \norm{u_0}_2^4,
      \label{eq: energy_potential_bound}
\end{align}
where the penultimate estimate follows by $\norm{Iu_0^\lambda}_{2}^2 \leq \lambda^{1-2\alpha} \norm{u_0}_{2}^2$. Note that by the conservation of mass, $\norm{u_0}_2$ is constant so we can safely drop its dependence in the remainder of the estimates. We also drop the dependence of $\alpha$ and $s$ in all constants. \\
At time $t=0$, \eqref{eq: energy_derivative_bound} and \eqref{eq: energy_potential_bound} imply that $E^1(u_0^\lambda) \leq C \lambda^{1-2\alpha}$. We will next show that there exists $N_0=N_0(\norm{u_0}_{H^s})$ such that
\begin{equation}
    E^1(u^\lambda(nT_0)) \leq 2C \lambda^{1-2\alpha} \text{ for all } n \ll N^{\gamma(4\alpha-1)-}\lambda^{2(2\alpha-1)} \text{ and } N\geq N_0,
    \label{eq: continuity argument statement}
\end{equation}
where $T_0 \sim \lambda^{2(2\alpha-1)-}$ and $\gamma$ is defined in Lemma \ref{lemma: growth E2}. \\
To this end, define $\overline{n}=\min\{n\geq 0: E^1(u^\lambda(nT_0)) > 2 C \lambda^{1-2\alpha}\}$. We will argue by contradiction. Fix $n_0 = c N^{\gamma(4\alpha-1)-}\lambda^{2(2\alpha-1)}$ where $0<c<1$ is a small constant to be determined and assume that $\overline{n}<n_0$. Next, select $N_0$ large enough so that we can apply Lemma \ref{lemma: lwp_ILNS}. By Lemmata \ref{lemma: growth E2} and \ref{lemma: lwp_ILNS} followed by Lemma \ref{lemma: E_2_vs_E_1} we have that
\begin{align*}
    E^2(u^\lambda(\overline{n} T_0)) &\leq  E^2(u^\lambda(0)) + C_1n_0 N^{-\gamma(4\alpha-1)+}\lambda^{-3(2\alpha-1)}, \\
    &\leq E^1(u^\lambda(0))+ C_1n_0 N^{-\gamma(4\alpha-1)+}\lambda^{-3(2\alpha-1)}+ C_2N^{-\alpha}\lambda^{-2(2\alpha-1)}, \\
    &\leq \lambda^{1-2\alpha}\left(C+ C_1n_0 N^{-\gamma(4\alpha-1)+}\lambda^{-(2\alpha-1)}+ C_2N^{-\alpha}\lambda^{-(2\alpha-1)}\right).
\end{align*} 
Moreover, by Lemma \ref{lemma: E_2_vs_E_1} followed by Lemma \ref{lemma: lwp_ILNS}
\begin{align*}
    E^1(u^\lambda(\overline{n} T_0)) &\leq E^2(u^\lambda(\overline{n} T_0))+C_2N^{-\alpha} \lambda^{2(2\alpha-1)},\\
   &\leq \lambda^{1-2\alpha}\left(C+ C_1n_0 N^{-\gamma(4\alpha-1)+}\lambda^{-(2\alpha-1)}+ 2C_2N^{-\alpha}\lambda^{-(2\alpha-1)}\right).
\end{align*}
Therefore, by possibly increasing $N_0$ and by selecting $c\ll 1$, we have shown that $E^1(u^\lambda(\overline{n} T_0)) \leq 2 C \lambda^{1-2\alpha}$ which is a contradiction.  \\
%Moreover, we select $N=N(\norm{u_0}_{H^s})$ large enough so that we can apply Lemma \ref{lemma: lwp_ILNS}.\\ 
%Next we continue solutions $Iu^\lambda$ with a uniform time step of $T_0 \sim \lambda^{2\alpha-}$. To do this note that
% \begin{eqnarray*}
%  E^2(u^\lambda(T_0)) &\lesssim& E^2(u^\lambda(0)) + N^{-\frac{5}{6}(4\alpha-1)+} \lambda^{3(1-2\alpha)} \\
%  &\lesssim& E^1(u^\lambda(0))+ N^{-2\alpha+}\lambda^{2(1-2\alpha)} +  N^{-\frac{2}{3}(4\alpha-1)+}\lambda^{3(1-2\alpha)} \\ &\lesssim& \lambda^{1-2\alpha},
% \end{eqnarray*}
% where the first inequality follows from Lemma \ref{lemma: growth E2}, the second inequality is due to Lemma \ref{lemma: E_2_vs_E_1} and the last estimate holds by increasing $N$ by an absolute constant. Now by Lemma \ref{lemma: E_2_vs_E_1} we have that
% \begin{equation*}
%     E^1(u^\lambda(T_0)) \lesssim  E^2(u^\lambda(T_0))  + N^{-2\alpha+}\lambda^{2(1-\alpha)} \lesssim \lambda^{1-2\alpha},
% \end{equation*}
% by increasing $N$ by an absolute constant. We can repeat this process and continue solutions $Iu^{\lambda}$ on the interval $[0,nT_0]$ as long as $n \ll N^{\frac{2}{3}(4\alpha-1)+}\lambda^{2(2\alpha-1)}$. 
By undoing the rescaling on \eqref{eq: continuity argument statement}, we obtain solutions to the original (fNLS) satisfying
\begin{equation*}
    \norm{u(t)}_{\dot{H}^s(\mathbb{T})}^2 \leq \lambda^{2\alpha-1+2s} \norm{u^\lambda(t\lambda^{2\alpha})}_{\dot{H}^s(\T_\lambda)}^2 \lesssim \lambda^{2s} \sim N^{2(\alpha-s)} \norm{u_0}_{\dot{H}^s(\T)}^2
\end{equation*}
 for $t\in [0,T]$, where $T \sim N^{\gamma(4\alpha-1)-}\lambda^{2(3\alpha-2)}  \sim  N^{\gamma(4\alpha-1)+\frac{2(\alpha-s)(3\alpha-2)}{s}-}\norm{u_0}_{H^s}^{\frac{2(3\alpha-2)}{s}}$. Therefore, we may deduce that solutions can be continued for time arbitrary intervals as long as,
 \begin{equation*}
     \lim_{N \rightarrow + \infty} N^{\gamma(4\alpha-1)-}\lambda^{2(3\alpha-2)} = + \infty.
 \end{equation*}
This is indeed true. If $\alpha \geq \frac{2}{3}$ this is clear, so it suffices to only consider $\alpha<\frac{2}{3}$. The case $\lambda^{4\alpha-3} \geq N^{-(2\alpha-1)}$ is trivially handled since
\begin{equation*}
    N^{\frac{2}{3}(4\alpha-1)-}\lambda^{2(3\alpha-2)} \geq \lambda^{\frac{2}{3}\frac{(3-2\alpha)(1-\alpha)}{(2\alpha-1)}-}.
\end{equation*}
Finally, the case $\lambda^{4\alpha-3} < N^{-(2\alpha-1)}$ follows from the fact that,
\begin{equation*}
    (4\alpha-1)s+2(\alpha-s)(3\alpha-2)>0 \iff s>\frac{2\alpha(2-3\alpha)}{3-2\alpha}. \qedhere
\end{equation*}
 % \textcolor{blue}{The actual bound below is incorrect and needs fixing!}
 % Finally, note that we have proved the following polynomial bound,
 % \begin{equation*}
 %     \sup_{t \in [0,T)}\norm{u(t)}_{H^s(\T)} \leq C\left(s,\alpha,\norm{u_0}_2\right) (1+T^\frac{3s(\alpha-s)}{4(1-\alpha)+6\alpha(2\alpha-1)})\norm{u_0}_{H^s(\mathbb{T})}.
 % \end{equation*}
\end{proof}
\section{Proof of Theorem 
\ref{theorem:main_illposedness_theorem_n=1}} \label{sec:5}
 We introduce notation and state elementary lemmata used throughout this section. Define the frequency shift operator $M_n f(x):= e^{inx} f(x)$ for $n \in \Z$ and let $\psi(z):=\abs{z}^2z$ denote the cubic nonlinearity. Also recall the translation operator $T_b f(x):=f(x+b)$ defined in \cref{sec: I-operator}. We next record an approximate Galilean invariance identity for the fractional Schrödinger group.
\begin{lemma}
\label{lemma:galilean}
    Suppose $l, n \in \N$ with $n\geq l $ and let $f \in L^2(\T)$ satisfy $\supp \hat{f} \subseteq [-l,l]$. Then,
\begin{equation*}
        S(t) M_n f(x)= e^{in^{2\alpha}t}M_nT_{bt}f(x) +w(x,t),
    \end{equation*}
where $b:=2\alpha n^{2\alpha-1}$ and the error term, $w \in L^2(\T)$, satisfies 
\begin{equation*}
    \abs{\widehat{w}(k,t)} \leq \abs{t} l^2 n^{2\alpha-2}\abs{\widehat{f}(k)} \text{ for all } t\in \R, k \in \Z. 
\end{equation*}
\begin{proof}
    A straightforward computation yields
    \begin{equation*}
        S(t)e^{i(n+k)x} =  e^{i(k+n)x+i(k+n)^{2\alpha} t} =e^{i(k+n)x+in^{2\alpha}\left(1+2\alpha \frac{k}{n}+r_k\right)t},
    \end{equation*}
where the remainder satisfies $0 \leq r_k \leq \frac{k^2}{n^2}$. Thus,
\begin{equation*}
  S(t)e^{i(n+k)x}=e^{i n^{2\alpha} t} e^{inx} e^{ik(x+bt)}\left(1+\delta_{n,k}\right) ,
\end{equation*}
where $\delta_{n,k}:=e^{in^{2\alpha}r_kt}-1$.
\end{proof}
\end{lemma}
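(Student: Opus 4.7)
The plan is to expand $M_n f$ as a Fourier series, apply $S(t)$ to each exponential mode, and perform a Taylor expansion of the phase $(n+k)^{2\alpha}$ about $k=0$ to separate the leading ``Galilean'' term from the error $w$.

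I would begin with the truncated Fourier representation $M_n f(x) = \frac{1}{2\pi}\sum_{|k|\leq l} \widehat{f}(k)\, e^{i(n+k)x}$, which uses the support hypothesis on $\widehat{f}$. Since $|k|\leq l \leq n$, one has $n+k \geq 0$, so $|n+k|^{2\alpha} = (n+k)^{2\alpha}$ and I can apply Taylor's theorem at the base point $n$ to write
\[
(n+k)^{2\alpha} = n^{2\alpha} + 2\alpha\, n^{2\alpha-1}\, k + n^{2\alpha} r_k, \qquad 0 \leq r_k \leq C(\alpha)\, k^2/n^2.
\]
Substituting this into the phase and factoring yields, for each mode,
\[
S(t)\, e^{i(n+k)x} = e^{in^{2\alpha}t} e^{inx} e^{ik(x+bt)} \bigl(1 + \delta_k(t)\bigr), \qquad \delta_k(t) := e^{i n^{2\alpha} r_k t} - 1,
\]
with $b = 2\alpha n^{2\alpha-1}$. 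Summing against $\widehat{f}(k)$, the ``$1$''-piece reassembles via the translation-modulation identity into $e^{in^{2\alpha}t}M_n T_{bt}f(x)$, while the $\delta_k$-piece defines $w$. The elementary inequality $|e^{i\theta}-1|\leq|\theta|$ together with $|k|\leq l$ gives $|\delta_k(t)| \leq |n^{2\alpha} r_k t| \lesssim |t|\, k^2\, n^{2\alpha-2} \leq |t|\, l^2\, n^{2\alpha-2}$ pointwise in frequency, which transfers directly to the claimed bound on $\widehat{w}(\cdot, t)$ after the natural re-indexing coming from the modulation by $M_n$.

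The main obstacle is controlling the Taylor remainder $r_k$ uniformly down to the boundary case $k \sim -n$, where the second derivative $2\alpha(2\alpha-1)(n+s)^{2\alpha-2}$ of $s \mapsto s^{2\alpha}$ becomes singular at $s=0$. This is handled by using the integral form of the remainder and substituting $s = ku$:
\[
n^{2\alpha} r_k = k^2 \int_0^1 2\alpha(2\alpha-1)\, (n+ku)^{2\alpha-2}\,(1-u)\, du.
\]
The worst case $k = -n$ reduces the integrand to $2\alpha(2\alpha-1)\, n^{2\alpha-2} (1-u)^{2\alpha-1}$; since $2\alpha - 1 > 0$, this is integrable over $[0,1]$ and produces $r_k \leq C(\alpha)\, k^2/n^2$. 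This is precisely the point at which the hypothesis $\alpha > 1/2$ is essential. Once this Taylor bound is in hand, the remainder of the argument is a direct combination of Fourier expansion, term-by-term Schrödinger evolution, and the scalar inequality $|e^{i\theta}-1|\leq|\theta|$.
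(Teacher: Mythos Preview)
Your argument is correct and essentially identical to the paper's: expand $M_nf$ in Fourier modes, Taylor expand the phase $(n+k)^{2\alpha}$ about $k=0$, and isolate the error as $\delta_k = e^{in^{2\alpha}r_k t}-1$. You supply more detail than the paper does on why the remainder satisfies $r_k \lesssim k^2/n^2$ uniformly down to $k=-n$ (via the integral form of the remainder and the integrability of $(1-u)^{2\alpha-1}$ for $\alpha>\tfrac12$), and you make explicit the step $|e^{i\theta}-1|\le|\theta|$, which the paper leaves to the reader. One small point: the lemma as stated has no implicit constant in the bound on $\widehat{w}$, and in fact one can take your $C(\alpha)=1$; a short convexity argument shows $(1+x)^{2\alpha}-1-2\alpha x \le x^2$ for all $x\in[-1,1]$ when $2\alpha\in(1,2]$, which recovers the sharp inequality.
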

 Additionally, we establish an elementary $L^2$ bound for mixed products of functions assuming pointwise dominance of their Fourier coefficients.
\begin{lemma}
\label{lemma: convolution}
    Let $p,q \in \N$. Then for any $f,g \in L^2(\T)$ satisfying $\hat{f}(k) \geq 0$ and $\abs{\widehat{g}(k)} \leq \widehat{f}(k)$ for all $k \in \Z$, we have the following estimate
    \begin{equation*}
        \norm{f^p g^q}_2 \leq \norm{f^{p+q}}_2.
    \end{equation*}
\begin{proof}
    This lemma follows immediately from Parseval's theorem once we establish the following pointwise inequality $\left| \widehat{g^q}(k) \right|\leq \widehat{f^q}(k)$ for all $k \in \Z$ and $q \in \N$. We prove this by induction on $q$. The base case $q=1$ follows by assumption. For the inductive step assume that $\abs{\widehat{g^q}} \leq \widehat{f^q}.$ Then,
    \begin{equation*}
        \abs{\widehat{g^{q+1}}} \leq \abs{\widehat{g^{q}}}\ast \abs{\widehat{g}} \leq \widehat{f^q} \ast \widehat{f} = \widehat{f^{q+1}}. \qedhere  
    \end{equation*}
\end{proof}
\end{lemma}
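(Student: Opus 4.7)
The plan is to reduce the claim to a pointwise inequality on Fourier coefficients and then invoke Plancherel. Concretely, I will show that
\begin{equation*}
    \bigl\lvert\widehat{f^p g^q}(k)\bigr\rvert \leq \widehat{f^{p+q}}(k) \quad \text{for every } k \in \Z.
\end{equation*}
Once this is in place, squaring and integrating against $(dk)_1$ via \eqref{def:Plancharel} gives $\norm{f^p g^q}_2^2 \leq \norm{f^{p+q}}_2^2$. Note that $\widehat{f^{p+q}}$ is automatically non-negative because it is the iterated convolution of the non-negative sequence $\widehat{f}$, so the right-hand side above does not require absolute values.

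To prove the pointwise bound, I apply the convolution identity \eqref{def: Convolution} to write $\widehat{f^p g^q} = \widehat{f^p} * \widehat{g^q}$, and then use the triangle inequality inside the convolution, noting that $\widehat{f^p} \geq 0$. This reduces the task to establishing the intermediate pointwise claim $\lvert\widehat{g^q}(k)\rvert \leq \widehat{f^q}(k)$ for all $k \in \Z$ and all $q \in \N$. Granted this, the convolution immediately factors as
\begin{equation*}
    \bigl\lvert\widehat{f^p g^q}(k)\bigr\rvert \leq \bigl(\widehat{f^p} * \bigl\lvert\widehat{g^q}\bigr\rvert\bigr)(k) \leq \bigl(\widehat{f^p} * \widehat{f^q}\bigr)(k) = \widehat{f^{p+q}}(k).
\end{equation*}

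The intermediate claim is proved by induction on $q$. The base case $q=1$ is just the standing hypothesis $\lvert\widehat{g}(k)\rvert \leq \widehat{f}(k)$. For the inductive step, I use $\widehat{g^{q+1}} = \widehat{g^q} * \widehat{g}$, apply the triangle inequality inside the convolution, and then combine the inductive hypothesis with the base case to dominate the result by $\widehat{f^q} * \widehat{f} = \widehat{f^{q+1}}$, as desired.

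I do not anticipate any genuine obstacle: the only care required is the bookkeeping involved in threading absolute values through convolutions, which is why the non-negativity assumption $\widehat{f} \geq 0$ is essential (it is what allows the bound to land on $\widehat{f^{p+q}}$ itself rather than on $\lvert\widehat{f^{p+q}}\rvert$). The argument uses no property of $\T$ beyond Plancherel and the convolution identity for Fourier coefficients of products.
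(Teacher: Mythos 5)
Your proof is correct and follows essentially the same route as the paper's: the key step is the induction establishing $\lvert\widehat{g^q}(k)\rvert \leq \widehat{f^q}(k)$, after which Plancherel finishes the argument. You spell out one bookkeeping step the paper leaves implicit (threading $\widehat{f^p}*\lvert\widehat{g^q}\rvert \leq \widehat{f^p}*\widehat{f^q} = \widehat{f^{p+q}}$ before applying Plancherel), which is a harmless and welcome clarification.
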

 We now apply the preceding lemmata to prove the main ill-posedness result.
\begin{proof}[Proof of Theorem \ref{theorem:main_illposedness_theorem_n=1}]
     By \cite{Tao_disperive_book}, it suffices to prove that the map
    \begin{equation*}
        u_0 \mapsto \int_0^t S(t-t') \psi\left(S(t')u_0\right) dt',
    \end{equation*}
    is not bounded from $H^s(\T)$ to $C^0([0,T] \rightarrow H^s(\T))$ for $s<\frac{1-\alpha}{2}$ and any $T \ll 1$. To this end, define $u_0:=M_n f$ where
\begin{equation*}
    f(x):=n^{\frac{\alpha-1}{2}-s} \sum_{k=0}^{l_n} e^{ikx},
\end{equation*}
$l_n:=\lfloor n^{1-\alpha} \rfloor$ and $n \in \N$. Note that $\norm{u_0}_{H^s} \sim 1$. We will next show that for all $0 \leq t' \leq t \leq T \ll 1$,
\begin{equation}
    \norm{S(t-t')\psi\left(S(t')M_n f\right)-e^{in^{2\alpha} t} M_nT_{bt}\psi(f)}_2 \ll \norm{\psi(f)}_2.
    \label{eq:main_differene}
\end{equation}
First note that by Lemma \ref{lemma:galilean} followed by Lemma \ref{lemma: convolution}, we have the following estimate
\begin{align*}
   \norm{\psi\left(S(t')M_n f\right)-e^{in^{2\alpha} t'}M_n T_{bt'}\psi(f)}_2 &\lesssim \norm{f^2w}_2+\norm{fw^2}_2+\norm{w^3}_2,  \\
    &\ll \norm{\psi(f)}_2,
\end{align*}
and since $S(t)$ is unitary on $L^2(\T)$,
\begin{equation}
   \norm{S(t-t')\left(\psi\left(S(t')M_n f\right)-e^{in^{2\alpha} t'}M_n T_{bt'}\psi(f)\right)}_2 
    \ll \norm{\psi(f)}_2.
    \label{eq:part1}
\end{equation}
Another application of Lemma \ref{lemma:galilean} yields
\begin{equation}
   \norm{e^{in^{2\alpha} t'}S(t-t')M_n T_{bt'}\psi(f)-e^{in^{2\alpha} t}M_n T_{bt}\psi(f)}_2 
    \ll \norm{\psi(f)}_2.
    \label{eq:part2}
\end{equation}
Combining \eqref{eq:part1}, \eqref{eq:part2} and the triangle inequality yields \eqref{eq:main_differene}. Thus, we have shown that for $0 \leq t \leq T \ll 1$,
\begin{equation*}
    \norm{\int_0^t S(t-t') \psi\left(S(t')u_0\right) dt'}_{H^s} \gtrsim t n^s \norm{\psi(f)}_2 \sim t n^{1-\alpha-2s}. \qedhere
\end{equation*}
\end{proof}
\section{Acknowledgment}
The authors are grateful to Professor Larry Guth and Professor Gigliola Staffilani for their valuable feedback during the preparation of this manuscript. The authors also appreciate the careful proofreading and helpful suggestions of Paige Bright and Jiahui Yu.
%\printbibliography
\bibliography{references}
% Bibliography style
\bibliographystyle{amsplain2.bst}

\bigskip

\end{document}